\documentclass[a4paper,11pt]{amsart}
\usepackage{amsfonts,amscd,amsmath,amssymb,tikz}
\usepackage{pifont}
\usepackage{youngtab}
\usepackage{ytableau}

\newtheorem{thm}{Theorem}[section]
\newtheorem{lem}[thm]{Lemma}

\newtheorem{prop}[thm]{Proposition}
\newtheorem{define}[thm]{Definition}

\newtheorem*{teoA}{Theorem A}

\theoremstyle{definition}

\theoremstyle{remark}
\newtheorem{rem}[thm]{Remark}
\newtheorem{example}[thm]{Example}

\begin{document}

\title{Part bounds for the Sylow permutation characters of $S_n$}

\author{Lorenzo Vanzi}
\address{Dipartimento di Matematica e Informatica Ulisse Dini, Universita degli Studi di Firenze}
\email{lorenzo.vanzi1@edu.unifi.it}

\keywords{}


\begin{abstract}
We study the Sylow permutation character of the symmetric group at the prime $2$ and prove some new bounds on the number of parts of partitions corresponding to its constituents. 
\end{abstract}

\maketitle


\section{Introduction}
Let $G$ be a finite group, let $p$ be a prime and let $P$ be a Sylow $p$-subgroup of $G$.
The study of the interplay between the representation theory of $G$ and that of its Sylow subgroups has been an active research theme for decades \cite{N10}. 
The structure of the Sylow permutation character $(1_P)\uparrow^{G}$ encodes a great deal of information about the algebraic structure of the group $G$. 
For instance, the normality of $P$ in $G$ can be read off the irreducible constituents of $(1_P)\uparrow^{G}$ \cite{MN12, GLLV}.
Despite this, very little is known about the decomposition of $(1_P)\uparrow^{G}$ in general. 
In this article we focus our attention on the case where $G=S_n$ is a symmetric group. From now on we let $\Omega_p(n)$ be the subset of $\mathrm{Irr}(S_n)$ consisting of all the irreducible constituents of $(1_P)\uparrow^{S_n}$. 
Since irreducible characters of $S_n$ are naturally labelled by partitions of $n$, it is convenient to think of $\Omega_p(n)$ as a subset of $\mathcal{P}(n)$, the set of partitions of $n$.

In \cite{GL1} Giannelli and Law gave a complete description of $\Omega_p(n)$ for every odd prime $p$. In subsequent work, again considering only odd primes, they extended this analysis and described the irreducible constituents of $\theta\uparrow^{S_n}$, for every $\theta\in\mathrm{Irr}(P)$ \cite{GL3}. It is surprising that, despite the amount of information accumulated for odd primes, very little is known for the prime $p=2$. 

The aim of this note is to start the investigation of the set $\Omega_2(n)$. 
In order to present our main result, we briefly introduce some notation. Let $\mathcal{H}(n)$ be the set of \textit{hook partitions} of $n$. Namely, partitions of the form $(n-x, 1^x)$, for some $0\leq x\leq n-1$. 
An easy consequence of \cite{G17} is the complete description of the set $\Omega_2(n)\cap\mathcal{H}(n)$. For this reason, here we focus on \textit{non-hook partitions}. With this in mind, the question we ask ourselves is the following one: what is the maximal number $\ell\in\mathbb{N}$ such that every non-hook partition with at most $\ell$ parts lies in $\Omega_2(n)$?
We call $\Delta(n,k)$ the subset of $\mathcal{P}(n)\smallsetminus\mathcal{H}(n)$ consisting of partitions with at most $k$ parts. The main result of this paper is Theorem \ref{main} below, where for every $n$ we determine the maximal $\ell$ such that $\Delta(n,\ell)\subseteq \Omega_2(n)$. Ignoring a few exceptions for small values of $n$, our main result can be stated as follows.

\begin{teoA}
Let $n\in\mathbb{N}$ and let us assume that $n\geq 11$. Let $n=2^{a_1}+2^{a_2}+\cdots+2^{a_t}$ be its binary expansion, where $a_1>a_2>\cdots>a_t\geq 0$. Then $$\Delta(n,k)\subseteq\Omega_2(n)\ \text{if and only if}\ k\leq a_1+t-1.$$ 
\end{teoA}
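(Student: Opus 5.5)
We split Theorem A into two halves. First, we show that some non-hook partition with $a_1+t$ parts is missing from $\Omega_2(n)$. Second, we show that every non-hook partition with at most $a_1+t-1$ parts lies in $\Omega_2(n)$. Both halves use the structure of the Sylow $2$-subgroup $P_n=P_{2^{a_1}}\times\cdots\times P_{2^{a_t}}$, where each $P_{2^a}=P_{2^{a-1}}\wr C_2$ is an iterated wreath product.

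By transitivity of induction and the Littlewood--Richardson rule,
$$(1_{P_n})\uparrow^{S_n}=\prod_{i=1}^t (1_{P_{2^{a_i}}})\uparrow^{S_{2^{a_i}}}.$$
So $\Omega_2(n)$ is the set of partitions occurring in LR products $\chi^{\lambda_1}\times\cdots\times\chi^{\lambda_t}\uparrow^{S_n}$ with $\lambda_i\in\Omega_2(2^{a_i})$. For a single power $2^a$, the recursion
$$(1_{P_{2^a}})\uparrow^{S_{2^a}}=\big((1_{P_{2^{a-1}}})\uparrow^{S_{2^{a-1}}}\wr 1_{C_2}\big)\uparrow^{S_{2^a}}$$
shows that $\Omega_2(2^a)$ consists of the constituents of the plethysm-type characters $\mathrm{Sym}^2$ of $\chi^\mu$ with $\mu\in\Omega_2(2^{a-1})$. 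We also get all constituents of $\chi^\mu\chi^\nu\uparrow$ with $\mu\neq\nu$ in $\Omega_2(2^{a-1})$.

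\textbf{Step 1 (prime powers).} Prove by induction on $a$ that $\Omega_2(2^a)$ contains no partition with more than $a+1$ parts, and which is not a hook. In fact the length bound should hold for all non-hooks: the hooks $(2^a-x,1^x)$ lie in $\Omega_2$ only for special $x$ by \cite{G17}. Conversely, show that every non-hook with at most $a$ parts (or $a+1$, with explicit exceptions) lies in $\Omega_2(2^a)$.

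The upper bound follows from the recursion. A constituent of $\chi^\mu\chi^\nu$ has length at most $\ell(\mu)+\ell(\nu)$. The symmetric square $\mathrm{Sym}^2\chi^\mu$ of a character attached to a hook or short partition should have restricted length. I need to check carefully how hooks (which can be long) interact: the product of two long hooks is controlled through the known hook constituents from \cite{G17}.

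For the lower bound, I take a non-hook $\lambda$ of $2^a$ with few parts. I split $\lambda$ via the LR rule as a constituent of $\chi^\mu\chi^\nu$ with $\mu\ne\nu$ both in $\Omega_2(2^{a-1})$ and each having at most $a-1$ parts, or as a constituent of $\mathrm{Sym}^2\chi^\mu$ when a symmetric splitting is forced. The case analysis comes from removing a suitable horizontal/vertical strip. For the base, small $a$ is checked by direct computation.

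\textbf{Step 2 (general $n$).} For the inclusion $\Delta(n,a_1+t-1)\subseteq\Omega_2(n)$, take a non-hook $\lambda$ with at most $a_1+t-1$ parts. Peel off the factors in order: choose $\lambda_t,\dots,\lambda_2$ of sizes $2^{a_t},\dots,2^{a_2}$, each a one-part or short partition in $\Omega_2(2^{a_i})$ (for instance the trivial partition, or a two-part one). Do this so that $\lambda/(\lambda_2\cup\cdots)$ remains a non-hook with at most $a_1+1$ parts inside $\Omega_2(2^{a_1})$, or so that it is a hook lying in $\Omega_2$. Each peeled factor contributes at most one extra row, which explains the $t-1$. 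For sharpness, exhibit a non-hook with $a_1+t$ parts not in $\Omega_2(n)$, e.g. $(n-a_1-t-1,2,1^{a_1+t-2})$. Using the length bound from Step 1 on each factor and the LR length bound, any constituent with $a_1+t$ rows forces each factor to be of maximal length. In particular, the $2^{a_1}$-factor must be a hook of a forbidden type or a non-hook of length $a_1+1$, and the first column count then contradicts the shape.

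\textbf{Main obstacle.} The hard part is Step 1's precise description of which long partitions of $2^a$ lie in $\Omega_2(2^a)$, including the hooks, and the exact extremal length $a+1$. Everything in Step 2 reduces to LR bookkeeping once that is known. I would first attack it by computing $\mathrm{Sym}^2$ of hook characters explicitly using known plethysm formulas for $s_2\circ s_{(m-x,1^x)}$. The hypothesis $n\geq 11$ is there to absorb the small exceptions found by direct computation.
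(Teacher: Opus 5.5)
Your Step~1 length bound is false. The sharpness argument rests on it, and it also leads you to cap the $2^{a_1}$-factor at $a_1+1$ rows in Step~2. Every partition of $2^a$ with all parts even lies in $\Omega_2(2^a)$ (Proposition~\ref{even}). So $(2^{2^{a-1}})$ is a non-hook in $\Omega_2(2^a)$ with $2^{a-1}>a+1$ parts once $a\geq 4$. Your own recursion explains why no bound like $a+1$ can propagate. The merged partition $\mu\cup\nu$ is always a constituent of $(\chi^\mu\times\chi^\nu)\uparrow^{S_{2^a}}$, so through Proposition~\ref{fund} lengths can double at each level. In fact the paper proves $\Delta(2^a,2a-3)\setminus\mathcal{AH}(2^a)\subseteq\Omega_2(2^a)$ (Proposition~\ref{extension}). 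The only length restriction that holds is for almost-hooks: $(2^a-l,2,1^{l-2})\in\Omega_2(2^a)$ iff $l\leq a$ (the result quoted as the base case of Lemma~\ref{AH}), together with $\Omega_2(2^a)\cap\mathcal{H}(2^a)=\{(2^a)\}$. Sharpness must therefore be proved for the almost-hook $(n-a_1-t,2,1^{a_1+t-2})$ specifically; your $(n-a_1-t-1,2,1^{a_1+t-2})$ is a partition of $n-1$. In a splitting via Proposition~\ref{fund gen}, both factors are subpartitions of $\lambda$, hence hooks or almost-hooks, and they cannot both be almost-hooks. A hook factor of size $2^{a_t}$ is trivial, a hook factor of size $n-2^{a_t}$ has at most $t-1$ rows (Lemma~\ref{H}), and an almost-hook factor of size $2^{a_t}$ has at most $a_t$ rows. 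Induction on $t$ then gives at most $a_1+t-1$ rows; this is Lemma~\ref{AH}. With your cap of $a_1+1$ rows on the $2^{a_1}$-factor, the count allows exactly $a_1+t$ rows, and the ``first column count'' meant to finish the argument is never supplied.

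Step~2 also breaks down where the theorem is hardest, and the key ingredient is missing. Every factor must be a subpartition of $\lambda$. A one-row factor of size $2^{a_i}$ is a horizontal strip, so it has at most $\lambda_1$ boxes and lowers the row count only if the last row has length at most $2^{a_i}$. When $t$ is close to $a_1$ and $\lambda$ has $a_1+t-1$ rows of nearly equal length, $\lambda_1$ is roughly $n/(2a_1)$, well below $2^{a_2}$. Take $n=63$ and $\lambda=(7^3,6^7)$, which has ten parts, with $a_1+t-1=10$. The factors of sizes $16$ and $8$ need at least $3$ and $2$ rows, and one-row factors of sizes $4,2,1$ cannot absorb a row of length $6$. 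So ``one extra row per peeled factor'' fails. You give no way to choose multi-row factors, certify them in $\Omega_2$, and get a nonzero L-R coefficient.

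The paper instead splits $n=2^{a_1}+(n-2^{a_1})$ into two blocks. The $2^{a_1}$-block may have up to $2a_1-3$ rows (Proposition~\ref{extension}). The complementary block is handled by Proposition~\ref{help}, which reduces to an all-even weight lying in $\Omega_2$ by Proposition~\ref{even}. The case where that block is a hook or almost-hook is treated separately via Lemmas~\ref{H} and~\ref{AH}. It is exactly the $2a-3$ range, which your Step~1 denies, that makes this work.

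Finally, even $\Delta(2^a,a)\subseteq\Omega_2(2^a)$ is only asserted in your proposal. The substance of Lemmas~\ref{1} and~\ref{2} is an explicit splitting with $\mu\neq\nu$, both in $\Omega_2(2^{a-1})$ so that Proposition~\ref{fund} applies, and with $\nu$ not a nontrivial hook. ``Removing a suitable horizontal/vertical strip'' does not address either requirement.
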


Theorem A shows that the irreducible character corresponding to any non-hook partition with \textit{few} parts is an irreducible constituent of the $2$-Sylow permutation character. This result extends the analysis of the restriction to Sylow $2$-subgroups previously performed in \cite{G17} and \cite{GV24}. We conclude by mentioning that we also prove a complementary result to Theorem A. More precisely, in Proposition \ref{big} we show that the maximum number of parts of a partition in $\Omega_2(n)$ is $\lceil\frac{n}{2}\rceil$. This result may be already known to experts, but we were not able to find a proof of it in the literature.

\subsection*{Acknowledgments}
The author thanks Eugenio Giannelli for suggesting this research project and for his invaluable help with the manuscript.

\section{Background and Notation}\label{sec: 2}
In this section we will establish notation and present some preliminary results required for the proofs in this paper.
\subsection{Partitions and characters of $S_n$}
The reader will most likely be familiar with the following definitions, but we will recall them for the sake of clarity. Let $n$ be a positive integer. A partition of $n$ is a sequence $\lambda=(\lambda_1,\,\ldots,\,\lambda_t)$ of non-increasing positive integers such that $\sum_{i=1}^{t}\lambda_i=n$. We may sometimes write $\lambda_s$ with $s>t$, in this case the number is zero. We use $\mathcal{P}(n)$ to denote the set of all partitions of $n$. The \textit{Young diagram} (or \textit{diagram}) of a partition $\lambda$ is the set
\[
[\lambda]:=\{(i,j)\in\mathbb{N}\times\mathbb{N}\,|\hspace{0.2cm} 1\leq i \hspace{0.2cm} \text{and} \hspace{0.2cm} 1\leq j \leq \lambda_i\}.
\]
The elements of $[\lambda]$ are called \textit{cells} or \textit{boxes}, and the two indices are respectively the \textit{row index} and \textit{column index}. Given a partition $\lambda$, the symbol $\lambda'$ will denote the conjugate partition, which is to say the partition whose diagram is obtained by swapping the coordinates of $[\lambda]$. Note that $\lambda'_1$ coincides with the number of parts of $\lambda$. The \textit{$(i,j)$-hook} of a partition $\lambda$ is the subset of $[\lambda]$ defined as follows:
\[
H_{ij}(\lambda):=\{(s,t)\in[\lambda]\,|\hspace{0.2cm} s=i \hspace{0.2cm} \text{and}\hspace{0.2cm} t\geq j, \text{or} \hspace{0.2cm} t=j \hspace{0.2cm} \text{and}\hspace{0.2cm} s\geq i\}.
\]
A \textit{hook partition} is a partition $\lambda$ such that $[\lambda]=H_{11}(\lambda)$. The set of all hook partitions of $n$ will be written $\mathcal{H}(n)$.
We will also use the following nonstandard definition: we say that $\lambda$ is an $almost-hook$ partition if $[\lambda]\setminus H_{11}(\lambda)$ has just one element. We will use $\mathcal{AH}(n)$ to denote the set of all almost-hook partitions of $n$. \newline
\indent It is a well known fact (e.g. \cite{James}) that the simple $\mathbb{C}S_n$-modules up to isomorphism are the Specht modules indexed by partitions of $n$. For $\lambda\in\mathcal{P}(n)$ we will use $S^\lambda$ to denote the corresponding Specht module, and $\chi^\lambda$ the character corresponding to such module. We will recall the structure of these modules briefly, since it will be used in the next subsection. If $\lambda\in\mathcal{P}(n)$, a \textit{$\lambda$-tableau} is a bijection $[\lambda]\rightarrow\{1,\ldots,\,n\}$. $S_n$ acts naturally on the set of $\lambda$-tableaux. Let $t$ be a $\lambda$-tableau, $R_t$ and $C_t$ denote the subgroups of $S_n$ that stabilize the rows and columns of $t$ respectively. We define an equivalence relation by $t\sim s$ if $s=t\tau$ for some $\tau\in R_t$. The equivalence class of $t$ is denoted by $\{t\}$, and this is called a \textit{$\lambda$-tabloid}. $S_n$ acts naturally on the set of $\lambda$-tabloids. We call the permutation module of $S_n$ over this set $M^\lambda$. The Specht module $S^\lambda$ is the submodule of $M^\lambda$ spanned by the \textit{$\lambda$-polytabloids}: the elements of the form $\sum_{\sigma\in C_t}sign(\sigma)\{t\}\sigma$ where $t$ is a $\lambda$-tableau.\newline
\indent Another fundamental result in the representation theory of symmetric groups, which will be essential in almost all the proofs in this paper, is the Littlewood-Richardson Rule. To state it we will need some extra notation. Let $n>m$ be positive integers. Let $\lambda\in\mathcal{P}(n)$ and $\mu\in\mathcal{P}(m)$. We say that $\mu$ is a \textit{subpartition} of $\lambda$ if $[\mu]\subset[\lambda]$. If $\mu$ is a subpartition of $\lambda$ we define the following set:
\[
[\lambda\setminus\mu]:=[\lambda]\setminus[\mu].
\]
This is known as a \textit{skew diagram}. A sequence of positive integers $\mathcal{C}=(c_1,\ldots,c_n)$ is said to have \textit{weight} $\lambda$ if
\[
\lambda_k=|\{j:\hspace{0.2cm}c_j=k\}|
\]
for all $k\geq1$. An element $c_j$ in the sequence is said to be \textit{good} if $c_j=1$ or
\[
|\{i<j:\hspace{0.2cm}c_i=c_j\}|<|\{i<j:\hspace{0.2cm}c_i=c_j-1\}|.
\]
If every element in the sequence is good we say that the sequence is good.\newline
If $n>m$ are positive integers, we may identify $S_m\times S_{n-m}$ with any subgroup of $S_n$ obtained as the direct product of the symmetric group acting on a subset of $\{1,\ldots,\,n\}$ of cardinality $m$ and the symmetric group acting on its complement (these are called \textit{Young subgroups}, and from now on when we write $S_m\times S_{n-m}\leq S_n$ it will always be a group of this form). The Littlewood-Richardson Rule allows for the construction of the restrictions of characters of $S_n$ to $S_m\times S_{n-m}$ and inductions from these subgroups to $S_n$.
\begin{thm}[Littlewood-Richardson Rule]
Let $n>m$ be positive integers. Let $\mu\in\mathcal{P}(m)$ and $\nu\in\mathcal{P}(n-m)$. Consider $\chi^\mu\times\chi^\nu\in\mathrm{Irr}(S_m\times S_{n-m})$. Then we have:
\[
(\chi^\mu\times\chi^\nu)\uparrow^{S_n}=\sum_{\lambda\in\mathcal{P}(n)}c^\lambda_{\mu\nu}\chi^\lambda
\]
where $c^\lambda_{\mu\nu}$ is the number of ways of filling $[\lambda\setminus\mu]$ with positive numbers such that:\newline
\indent (i) the sequence obtained reading the numbers from right to left, top to bottom is a good 
\indent     sequence of weight $\nu$;\newline
\indent (ii) the numbers are strictly increasing along the columns from top to bottom;\newline
\indent (iii) the numbers are weakly increasing along the rows from left to right.

\end{thm}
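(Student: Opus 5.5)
Since the statement is the classical Littlewood--Richardson Rule, I would prove it through symmetric functions rather than directly with Specht modules. The first step is to use the Frobenius characteristic map $\mathrm{ch}$. It sends $\chi^\lambda$ to the Schur function $s_\lambda$, it is an isometry between class functions and symmetric functions, and it turns induction from a Young subgroup $S_m\times S_{n-m}$ into multiplication: $\mathrm{ch}\big((\chi^\mu\times\chi^\nu)\uparrow^{S_n}\big)=s_\mu s_\nu$. These are standard facts; the product property follows from Frobenius reciprocity and the formula for $\mathrm{ch}$ in terms of power sums. Hence $c^\lambda_{\mu\nu}$ is the coefficient of $s_\lambda$ in $s_\mu s_\nu$, and the whole problem becomes combinatorial.

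Next I work in finitely many variables $x_1,\dots,x_N$ with $N\geq n$ and use the bialternant formula $s_\lambda=a_{\lambda+\delta}/a_\delta$, where $\delta=(N-1,\dots,1,0)$ and $a_\alpha=\det(x_i^{\alpha_j})$. I also use the tableau formula $s_\nu=\sum_T x^{\mathrm{wt}(T)}$, summed over semistandard tableaux $T$ of shape $\nu$. Since $s_\nu$ is symmetric, multiplying gives
\[
a_{\mu+\delta}\,s_\nu=\sum_{T}a_{\mu+\delta+\mathrm{wt}(T)}.
\]
Each term $a_\alpha$ is either $0$ or $\pm a_{\lambda+\delta}$ for a unique partition $\lambda$. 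The task is therefore to cancel all terms except those with $\mu+\mathrm{wt}(T)=\lambda$ a partition, each of which contributes exactly $+a_{\lambda+\delta}$.

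The key step, and the one I expect to be the main obstacle, is a sign-reversing involution in the style of Stembridge. Read $T$ right to left, top to bottom, to get a word $w_1w_2\cdots w_{|\nu|}$. Call $T$ \emph{bad} if some prefix $w_1\cdots w_j$ has $\mu+\mathrm{wt}(w_1\cdots w_j)$ failing to be a partition. For a bad $T$, take the smallest such $j$; at that point some $k$ has $\mu_k+\#\{k\text{'s}\}=\mu_{k+1}+\#\{(k+1)\text{'s}\}+1$. Apply the Bender--Knuth involution for the letters $k,k+1$ only to the part of $T$ read after position $j$. The difficulty is checking two things. First, the resulting filling is still semistandard, which needs care at the boundary between the frozen and the modified entries. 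Second, the map is an involution that preserves the minimal $j$ and the index $k$, while replacing $\mu+\delta+\mathrm{wt}(T)$ by its image under the transposition $(k\ k+1)$, so that the two $a$-terms cancel. I would handle the semistandard check exactly as in Stembridge's proof, by choosing $k$ minimal among the violations.

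The surviving tableaux are those $T$ of shape $\nu$ such that $\mu+\mathrm{wt}(\text{prefix})$ is a partition for every prefix. The last step is to biject these with the fillings in the statement. Given a survivor with $\mu+\mathrm{wt}(T)=\lambda$, record for each entry $r$ in row $i$ of $T$ the value $i$, placed in the next free cell of row $r$ of $[\lambda\setminus\mu]$. The prefix condition says precisely that this produces a skew semistandard filling of $[\lambda\setminus\mu]$, satisfying (ii) and (iii), of weight $\nu$. Semistandardness of $T$ translates into the goodness condition (i) for the right-to-left, top-to-bottom reading word. The inverse map reverses the roles of rows and values in the same way. Dividing by $a_\delta$ then gives $s_\mu s_\nu=\sum_\lambda c^\lambda_{\mu\nu}s_\lambda$ with $c^\lambda_{\mu\nu}$ as in the statement.
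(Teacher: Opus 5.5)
The paper gives no proof of this statement: it quotes the classical rule and refers to James's book. Your proposal is a correct proof by a different, symmetric-function route. You use the Frobenius characteristic, the expansion $a_{\mu+\delta}s_\nu=\sum_T a_{\mu+\delta+\mathrm{wt}(T)}$, a Stembridge-type Bender--Knuth involution, and a final bijection. That bijection is right. Process $T$ in reading order and fill row $r$ of $[\lambda\setminus\mu]$ from the left. Then the prefix condition is exactly column-strictness of the resulting filling, because an entry $r+1$ in row $i$ of $T$ is read before every $r$ of that row. Column-strictness of $T$ is exactly goodness of the filling's reading word. Compared with citing James, your route is self-contained modulo standard facts about Schur functions, but it leaves the Specht-module language in which the rest of the paper works.

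Two points in the involution step need correcting.

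First, at the minimal bad $j$ the new letter is $w_j=k+1$. The relation is $\mu_{k+1}+\#\{(k+1)\text{'s}\}=\mu_k+\#\{k\text{'s}\}+1$; yours has the sides swapped. Also $k=w_j-1$ is forced, so there is nothing to minimise.

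Second, and more importantly, the boundary check is not literally Stembridge's. His cut is vertical, and the fact that the frozen column contains $k+1$ but not $k$ is immediate. Your cut runs through a row. What you need is: if $w_j$ is in cell $(r,c)$, then $T(r-1,c)\neq k$. This follows from minimality of $j$:
\begin{itemize}
\item Suppose $T(r-1,c)=k$, and let the $k$'s of row $r-1$ occupy columns $\alpha,\dots,\beta$ with $\alpha\le c\le\beta$.
\item Every $k+1$ in row $r$ to the right of column $c$ sits under one of these $k$'s, so there are at most $\beta-c$ of them.
\item Compare the valid prefix ending just before cell $(r-1,\beta)$ with the prefix ending just before $w_j$. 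The $k$-count rises by $\beta-\alpha+1$, but the $(k+1)$-count rises by at most $\beta-c$.
\item This contradicts $(\mu+\mathrm{wt})_k=(\mu+\mathrm{wt})_{k+1}$ for the prefix just before $w_j$.
\end{itemize}

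Consequently the cells above the modified part of row $r$ are at most $k-1$. The frozen part of row $r$ is at least $k+1$, and the cells beneath it are at least $k+2$. So no vertical $k$/$(k+1)$ pair straddles the cut. The swap therefore stays semistandard; the horizontal boundary is safe since $w_j=k+1$. It also changes the weight of the modified region by exactly the transposition $(k\ k+1)$. That is what the cancellation of $a_{\mu+\delta+\mathrm{wt}(T)}$ against $a_{\mu+\delta+\mathrm{wt}(T^*)}$ requires, where $T^*$ denotes the image of $T$ under the involution.
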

For a proof see for example \cite[The Littlewood-Richardson rule]{James}. We refer to a filling of $[\lambda\setminus\mu]$ of the type above as a Littlewood-Richardson filling of $[\lambda\setminus\mu]$ of weight $\nu$. The above result shows that to prove that $[(\chi^\mu\times\chi^\nu)\uparrow^{S_n},\,\chi^\lambda]\neq0$, it suffices to find a Littlewood-Richardson (we will often abbreviate L-R) filling of $[\lambda\setminus\mu]$ of weight $\nu$. This will be used many times in this paper. Let us also highlight two simple but useful facts: by swapping $m$ and $n-m$ we see that if $c^\lambda_{\mu\nu}\neq0$ then $\nu$ is also a subpartition of $\lambda$; furthermore, in this case we have that $\lambda_1\leq \mu_1+\nu_1$ and $\lambda'_1\leq\mu'_1+\nu'_1$. And finally, let us state a simple to prove combinatorial remark, that will be used repeatedly in the sections below.
\begin{rem}\label{filling}
Let $n>m$ be positive integers. Let $\lambda\in\mathcal{P}(n)$ and $\mu\in\mathcal{P}(m)$, such that $\mu$ is a subpartition of $\lambda$; then the filling of $[\lambda\setminus\mu]$ given by the top to bottom numbering of the boxes of each column is a Littlewood-Richardson filling.
\end{rem}
In the following figure we give an example of this filling for the sake of clarity.
\[
\young(\times\times\times\times\times11,\times\times\times\times\times2,\times\times\times113,11,2)
\]
Here $\lambda=(7,\,6^2,\,2,\,1)$, $\mu=(5^2,\,3)$ and the weight of the filling is $(6,\,2,\,1)$.

\subsection{The Sylow Permutation Character}
Let $n$ be a positive integer, we define the following sets:
\[
\Omega_p(n):=\{\lambda\in\mathcal{P}(n)\, | \hspace{0.2cm} [\chi^\lambda,\,(1_{P_n})\uparrow^{S_n}]\neq0\}
\]
where $P_n$ is a Sylow $p$-subgroup of $S_n$. Clearly $\Omega_p(n)$ does not depend on the choice of the Sylow $p$-subgroup. The character $(1_{P_n})\uparrow^{S_n}$ is called a Sylow permutation character, and its significance and the reasons for trying to find its decomposition have been touched on in the introduction. As has been said there, the set $\Omega_p(n)$ has been determined in its entirety in \cite[Theorem~A]{GL1}, when $p$ is odd. In this paper we focus instead on the much more irregular $\Omega_2(n)$, which still elude a full description.\newline
\indent  We will now fix a particular Sylow $2$-subgroup of $S_n$ for each $n$. This will allow us to prove two interesting facts about $\Omega_2(n)$ before even reducing the problem using Clifford Theory. Both these results may already be known to experts, but we were not able to find proofs of them in the literature. For any $k\geq0$ we define $P_{2^k}$ as the Sylow $2$-subgroup of $S_{2^k}$ generated by the elements: $(1,2),(1,3)(2,4),\ldots,(1,2^{k-1}+1)\cdots(2^{k-1},2^k)$. Let $n>0$ be an integer, and $n=2^{a_1}+2^{a_2}+\cdots+2^{a_t}$ its binary expansion, with $a_1>\cdots>a_t$. We define $P_n$ as the Sylow $2$-subgroup of $S_n$ obtained as the product of the natural embeddings of the subgroups $P_{2^{a_k}}$ in $S_{\{m+1,\ldots,\,m+2^{a_k}\}}$ (the symmetric group acting on the set $\{m+1,\ldots,\,m+2^{a_k}\}$) where $m=2^{a_1}+\cdots+2^{a_{k-1}}$. With this notation fixed, we may observe that $(2k+1,2k+2)\in P_n$ for all $k\geq0$ such that $2k+2\leq n$, and that $P_n$ stabilizes the set $\{\{2k+1,\,2k+2\}:\hspace{0.2cm}2\leq2k+2\leq n\}$.\newline
\indent We will use the structure of the Specht modules to prove the following two results, so we will need to translate the character theoretic condition for $\lambda\in\Omega_2(n)$ into one that can be visualized in $S^\lambda$. This is done in the following lemma.
\begin{lem}\label{tab}
Let $n>0$ be an integer, and $\lambda\in\mathcal{P}(n)$. Then $\lambda\in\Omega_2(n)$ if and only if there exists a $\lambda$-tableau $t$ such that $\sum_{\pi\in P_n}e_t \pi\neq0$.
\end{lem}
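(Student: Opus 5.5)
Here $e_t$ denotes the $\lambda$-polytabloid $\sum_{\sigma\in C_t}\mathrm{sign}(\sigma)\{t\}\sigma$, and $P_n$ acts on the right, as in the definition of $S^\lambda$. The plan is to use Frobenius reciprocity to turn the character condition into the existence of nonzero $P_n$-fixed vectors in $S^\lambda$, and then to build such vectors by averaging polytabloids.

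First I rewrite membership in $\Omega_2(n)$. By Frobenius reciprocity,
\[
[\chi^\lambda,(1_{P_n})\uparrow^{S_n}]=[\chi^\lambda\downarrow_{P_n},1_{P_n}],
\]
and the right-hand side equals $\dim_{\mathbb{C}}(S^\lambda)^{P_n}$, the dimension of the space of $P_n$-fixed vectors in $S^\lambda$. So $\lambda\in\Omega_2(n)$ if and only if $(S^\lambda)^{P_n}\neq 0$.

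Next I use the averaging (Reynolds) operator
\[
T=\frac{1}{|P_n|}\sum_{\pi\in P_n}\pi
\]
acting on $S^\lambda$. Since we are over $\mathbb{C}$, $T$ is a projection of $S^\lambda$ onto $(S^\lambda)^{P_n}$: it fixes every $P_n$-fixed vector, and its image consists of $P_n$-fixed vectors. Hence $(S^\lambda)^{P_n}=T(S^\lambda)$.

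Now I prove the two implications.

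\emph{If there is a tableau.} Suppose $\sum_{\pi\in P_n}e_t\pi\neq 0$ for some $\lambda$-tableau $t$. This element equals $|P_n|\,T(e_t)$, so it is a nonzero $P_n$-fixed vector in $S^\lambda$. Therefore $(S^\lambda)^{P_n}\neq 0$ and $\lambda\in\Omega_2(n)$.

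\emph{If $\lambda\in\Omega_2(n)$.} Then $T(S^\lambda)=(S^\lambda)^{P_n}\neq 0$. The module $S^\lambda$ is spanned by the polytabloids $e_t$, and $T$ is linear. If $T(e_t)=0$ for every tableau $t$, then $T$ would vanish on all of $S^\lambda$, a contradiction. So some $t$ satisfies $T(e_t)\neq 0$, which is exactly $\sum_{\pi\in P_n}e_t\pi\neq 0$.

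None of these steps is a real obstacle. The only points needing care are identifying the multiplicity with the dimension of the fixed-point space, which needs characteristic zero, and being consistent about right actions, so that $T$ is linear and its image really is the fixed space.
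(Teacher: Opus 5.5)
Your proposal is correct and follows essentially the same route as the paper: Frobenius reciprocity reduces membership in $\Omega_2(n)$ to the existence of a nonzero $P_n$-fixed vector in $S^\lambda$, characteristic zero turns this into the averaging sum $\sum_{\pi\in P_n} w\pi$ being nonzero for some $w$, and the fact that polytabloids span $S^\lambda$ reduces $w$ to some $e_t$. Your explicit use of the Reynolds projection $T$ just spells out the middle step that the paper states in one line.
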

\begin{proof}
The condition $[\,\chi^{\lambda}\downarrow_{P_n},\, 1_{P_n}]\,\neq0$ is equivalent to 
\[\exists v\in S^\lambda \setminus \{0\} \hspace{0.2cm} \forall \pi \in P_n \hspace{0.2cm} v\pi=v.
\] Since $\mathrm{char}(\mathbb{C})=0$, this in turn is equivalent to 
\[
\exists w\in S^\lambda\hspace{0.2cm} \sum_{\pi \in P_n}w\pi \neq0.
\] Since $S^\lambda$ is spanned by the $\lambda$-polytabloids this last condition is equivalent to asking that there exist a $\lambda$-tableau $t$ such that $\sum_{\pi \in P_n}e_t\pi \neq0$.
\end{proof}
If all the parts of the partition are even, then it is easy to find such a tableau. This is the first result, and it will be used in the proofs leading to the main theorem.
\begin{prop}\label{even}
Let $n>0$ be an integer and $\lambda\in\mathcal{P}(2n)$ a partition whose parts are all even, then $\lambda\in\Omega_2(2n)$.
\end{prop}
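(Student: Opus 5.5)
The plan is to apply Lemma \ref{tab}: we exhibit a $\lambda$-tableau $t$ with $\sum_{\pi\in P_{2n}}e_t\pi\neq0$. We show this by proving that the coefficient of the tabloid $\{t\}$ in this sum is a positive integer.

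\textbf{Choice of $t$.} Since every part $\lambda_i$ is even, each row of $[\lambda]$ splits into horizontal dominoes occupying columns $2j-1,2j$. We fill $[\lambda]$ so that each domino contains a pair $\{2k+1,2k+2\}$, with $2k+1$ in column $2j-1$ and $2k+2$ in column $2j$. Because all parts are even, $\lambda'_{2j-1}=\lambda'_{2j}$ for every $j$. So columns $2j-1$ and $2j$ have the same length, and the $r$-th entries of the two columns form one of the fixed pairs. Recall that $P_{2n}$ permutes the set of pairs $\{\{2k+1,2k+2\}\}$ and contains every transposition $(2k+1,2k+2)$.

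\textbf{The coefficient of $\{t\}$.} Expanding $e_t=\sum_{\sigma\in C_t}\mathrm{sign}(\sigma)\{t\sigma\}$, the coefficient of $\{t\}$ in $\sum_\pi e_t\pi$ is
\[
\sum_{(\sigma,\pi)}\mathrm{sign}(\sigma),
\]
where the sum runs over the pairs $(\sigma,\pi)\in C_t\times P_{2n}$ with $\{t\sigma\pi\}=\{t\}$. It therefore suffices to show that $\mathrm{sign}(\sigma)=1$ for every such pair; the sum is then nonzero, since $(\mathrm{id},\mathrm{id})$ contributes.

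\textbf{Sign control (main obstacle).} Write $\sigma=\prod_j\sigma_{2j-1}\sigma_{2j}$, where $\sigma_c$ permutes column $c$. Since both columns in each domino pair have the same length, we can compare $\sigma_{2j-1}$ and $\sigma_{2j}$ as permutations of the row indices $\{1,\dots,\lambda'_{2j}\}$. The goal is to prove that $\{t\sigma\pi\}=\{t\}$ forces $\sigma_{2j-1}=\sigma_{2j}$ for every $j$. Then $\mathrm{sign}(\sigma)=\prod_j\mathrm{sign}(\sigma_{2j-1})^2=1$.

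The argument for this would run as follows. The rows of $t$ are unions of pairs, and $\pi$ maps pairs to pairs. Hence the condition that $t\sigma\pi$ has the same rows as $t$ forces each row of $t\sigma$ to be a union of pairs. The entries of row $i$ of $t\sigma$ in columns $2j-1$ and $2j$ are the odd and even members of the pairs in rows $\sigma_{2j-1}(i)$ and $\sigma_{2j}(i)$ of that column block. If these two rows differ, the two mates must be completed by entries coming from other columns.

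This is where care is needed. I would first try an induction on the column blocks from left to right, using that the members of a pair sit in adjacent columns in $t$. If this rigidity fails in general, the fallback is to pair off the bad terms rather than forbid them. Composing $\pi$ with transpositions $(2k+1,2k+2)\in P_{2n}$ swaps mates, and this should induce a sign-reversing involution on the pairs with $\sigma_{2j-1}\neq\sigma_{2j}$, cancelling them and leaving only positive contributions.
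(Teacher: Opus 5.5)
Your setup is the paper's: a domino-filled tableau $t$, the coefficient of $\{t\}$ in $\sum_{\pi}e_t\pi$, and the reduction to showing that every contributing $\sigma\in C_t$ induces the same row permutation on columns $2j-1$ and $2j$. However, you never prove that last claim, which is the only nontrivial step, and the sketch you give for it goes wrong at the crucial point. You say that if row $i$ of $t\sigma$ holds, in columns $2j-1$ and $2j$, members of two different pairs, then ``the two mates must be completed by entries coming from other columns.'' This cannot happen. Since $\sigma\in C_t$ fixes every column of $t$ setwise, the mate of any entry of column $2j-1$ still lies in column $2j$ in $t\sigma$. You correctly derive from $\{t\sigma\}=\{t\pi^{-1}\}$ that row $i$ of $t\sigma$ must be a union of pairs. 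That row meets column $2j$ in a single cell, so this cell must contain the mate of the entry in cell $(i,2j-1)$. Hence the row permutations of $\sigma$ on columns $2j-1$ and $2j$ coincide, directly and for every $j$, and $\mathrm{sign}(\sigma)=1$ follows. No induction on column blocks is needed. This is exactly the observation behind the paper's remark that the permutations acting on the two columns of each block have the same cycle type.

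Your fallback would not have worked either. As you describe it, the involution only alters $\pi$, which leaves $\mathrm{sign}(\sigma)$ unchanged, so it is not sign-reversing. The transpositions $(2k+1,2k+2)$ also do not lie in $C_t$, since mates sit in different columns of $t$, so they cannot be moved into $\sigma$ as in Lemma~\ref{temp}. As written, the proposal has a genuine gap at its key step, and the one-line column argument above closes it.
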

\begin{proof}
We will prove that the $\lambda$-tableau obtained by numbering the cells of $[\lambda]$ from left to right, top to bottom satisfies the condition in Lemma~\ref{tab}. Let $t$ be this $\lambda$-tableau.
We have
\[
\sum_{\pi \in P_n}e_t\pi=\sum_{\pi \in P_n}\sum_{\sigma\in C_t}sign(\sigma)\{t\}\sigma\pi.
\]
We will show that the coefficient of $\{t\}$ is positive. This coefficient is $\sum_{\pi,\sigma}sign(\sigma)$, where the sum is over all $\pi\in P_n$ and $\sigma\in C_t$ such that $\{t\}\sigma\pi=\{t\}$, or equivalently $\{t\}\sigma=\{t\}\pi^{-1}$. If $\sigma$ and $\pi$ satisfy this condition, then $sign(\sigma)=1$. This is because $P_n$ stabilizes the set $\{\{2k+1,\,2k+2\}:\hspace{0.2cm}0\leq k\leq n-1\}$, so for all such $k$, if $2k+1$ is in a certain row of $\{t\}\pi^{-1}$ then $2k+2$ is in that same row (because this condition is true for $\{t\}$). Any $\sigma\in C_t$ is the product of disjoint permutations acting on the individual columns. Since $\{t\}\sigma=\{t\}\pi^{-1}$, the previous condition tells us that the permutation acting on the column $2j+2$ has the same cyclic type as the one acting on the column $2j+1$, since each couple of adjacent elements in these columns of $\{t\}$ is of the type $\{2k+1,\,2k+2\}$. This means that $sign(\sigma)=1$. Since $1\in P_n\cap C_t$ the coefficient of $\{t\}$ is at least one.
\end{proof}
The next result we will prove is the result alluded to in the introduction as complementary to the main theorem. In this case we want to prove that certain partitions are not in $\Omega_2(n)$. This can be achieved with the following lemma.
\begin{lem}\label{temp}
Let $n>0$ be an integer. If $\lambda\in\mathcal{P}(n)$ is such that $(C_t\cap P_n)\setminus A_n\neq\emptyset$ for all $\lambda$-tableaux $t$, then $\lambda\notin\Omega_2(n)$.
\end{lem}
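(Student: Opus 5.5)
By Lemma~\ref{tab}, it suffices to show that $\sum_{\pi\in P_n}e_t\pi=0$ for every $\lambda$-tableau $t$. So fix an arbitrary $\lambda$-tableau $t$. By hypothesis there is an odd permutation $\tau\in C_t\cap P_n$, and we use it to produce a sign-reversing symmetry.

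The basic identity is that for $\tau\in C_t$ we have $e_t\tau=sign(\tau)e_t$. Indeed,
\[
e_t\tau=\sum_{\sigma\in C_t}sign(\sigma)\{t\}\sigma\tau .
\]
The substitution $\sigma'=\sigma\tau$ runs over $C_t$ as $\sigma$ does, and $sign(\sigma)=sign(\sigma')sign(\tau)$. Hence $e_t\tau=sign(\tau)e_t=-e_t$, since $\tau$ is odd.

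Now set $v=\sum_{\pi\in P_n}e_t\pi$. Because $\tau\in P_n$, left multiplication $\pi\mapsto\tau\pi$ is a bijection of $P_n$, so
\[
v=\sum_{\pi\in P_n}e_t\tau\pi=-\sum_{\pi\in P_n}e_t\pi=-v .
\]
Since we work over $\mathbb{C}$, this gives $v=0$.

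As $t$ was arbitrary, no $\lambda$-tableau satisfies the condition of Lemma~\ref{tab}, and therefore $\lambda\notin\Omega_2(n)$. The argument has no real obstacle. The only point needing care is the order of composition in the right action: the odd element must be absorbed on the left of $\pi$, which uses only that $P_n$ is a group containing $\tau$. Note also that it is the column-antisymmetry of polytabloids that forces an element of $C_t\cap P_n$ to act on $e_t$ by its sign.
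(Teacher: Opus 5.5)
Your proof is correct and is essentially the paper's argument: both reduce via Lemma~\ref{tab} and use an odd $\tau\in C_t\cap P_n$, reindexing $\pi\mapsto\tau\pi$ over $P_n$ and $\sigma\mapsto\sigma\tau$ over $C_t$ to obtain $\sum_{\pi\in P_n}e_t\pi=-\sum_{\pi\in P_n}e_t\pi$. The only difference is cosmetic: you state the identity $e_t\tau=sign(\tau)e_t$ as a separate step, whereas the paper does both reindexings in a single chain of equalities.
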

\begin{proof}
By Lemma~\ref{tab}, it is sufficient to prove that $\sum_{\pi\in P_n}e_t\pi=0$ for all $\lambda$-tableaux $t$. Let $t$ be a $\lambda$-tableau, and let $\tau\in C_t\cap P_n$ such that $sign(\tau)=-1$. Then we have:
\[
\sum_{\pi\in P_n}e_t\pi=\sum_{\pi\in P_n}\sum_{\sigma\in C_t}sign(\sigma)\{t\}\sigma\pi=\sum_{\pi\in P_n}\sum_{\sigma\in C_t}sign(\sigma)\{t\}\sigma(\tau\pi)=
\]
\[
=\sum_{\pi\in P_n}\sum_{\sigma\in C_t}sign(\sigma)\{t\}(\sigma\tau)\pi=-\sum_{\pi\in P_n}\sum_{\sigma\in C_t}sign(\sigma)\{t\}\sigma\pi=-\sum_{\pi\in P_n}e_t\pi.
\]
\end{proof}
\begin{prop}\label{big}
Let $n>0$ be an integer. If $\lambda\in\Omega_2(n)$ then $\lambda$ has at most $\lceil \frac{n}{2}\rceil$ parts.
\end{prop}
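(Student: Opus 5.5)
We argue by contraposition and apply Lemma~\ref{temp}. Suppose $\lambda\in\mathcal{P}(n)$ has $\ell=\lambda'_1>\lceil\frac{n}{2}\rceil$ parts. The goal is to show that for every $\lambda$-tableau $t$ there is a transposition of the form $(2k+1,\,2k+2)$ lying in $C_t$. Such a transposition belongs to $P_n$ by the observation made when $P_n$ was fixed, and it is odd. So it would give an element of $(C_t\cap P_n)\setminus A_n$, and Lemma~\ref{temp} would yield $\lambda\notin\Omega_2(n)$.

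To find the transposition, fix a tableau $t$ and look at the entries in the first column of $t$. There are $\ell$ of them, all distinct elements of $\{1,\ldots,n\}$. Partition $\{1,\ldots,n\}$ into the blocks $\{2k+1,2k+2\}$ for $0\le k\le \lfloor n/2\rfloor-1$, together with the singleton $\{n\}$ when $n$ is odd. The number of blocks is exactly $\lceil\frac{n}{2}\rceil$.

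Since $\ell>\lceil\frac{n}{2}\rceil$, the pigeonhole principle gives two entries of the first column lying in the same block. A singleton block cannot contain two entries, so this block is a pair $\{2k+1,2k+2\}$ with $2k+2\le n$. The transposition $(2k+1,\,2k+2)$ swaps two entries of the same column of $t$, so it lies in $C_t$. It also lies in $P_n$, and it is odd, as required.

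The argument is essentially immediate. The only point needing care is the block count for odd $n$: the leftover singleton $\{n\}$ is what makes the bound $\lceil n/2\rceil$ rather than $\lfloor n/2\rfloor$.
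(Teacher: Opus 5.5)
Your proof is correct and follows the same route as the paper: apply Lemma~\ref{temp} to the odd transpositions $(2k+1,2k+2)\in P_n$, noting that a first column with more than $\lceil n/2\rceil$ entries must contain both elements of some such pair. Your pigeonhole count over the $\lceil n/2\rceil$ blocks, including the singleton $\{n\}$ for odd $n$, just spells out the step the paper states in one line.
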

\begin{proof}
Let $\lambda\in\Omega_2(n)$. We know that $(2k+1,2k+2)\in P_n$ for all $k$ such that $2\leq 2k+2\leq n$. By Lemma~\ref{temp}, there must be a $\lambda$-tableau $t$ such that none of these $2$-cycles lies in $C_t$. In particular, if either of the elements $\{2k+1,2k+2\}$ lies in the first column of $t$, then the other must be in another. This means that the first column of $[\lambda]$ can have at most $\lceil\frac{n}{2}\rceil$ boxes.
\end{proof}
This is in fact an exact bound, since by Proposition~\ref{even} we know that $(2^n)=(2,\ldots,\,2)\in\Omega_2(2n)$, and Proposition~\ref{fund gen} will also yield $(2^n,\,1)\in\Omega_2(2n+1)$.\newline
\indent We will now prove two useful conditions for constructing $\Omega_2(n)$ inductively. These are essentially a slight modification to the arguments used in \cite{GL1} to reduce the odd prime case. The first proposition allows us to construct some of the partitions in $\Omega_2(2^{k+1})$ from partitions in $\Omega_2(2^k)$.
\begin{prop}\label{fund}
Let $k>0$ be an integer. Let $\mu,\nu\in\Omega_2(2^k)$ such that $\mu\neq\nu$. If $\lambda\in\mathcal{P}(2^{k+1})$ is such that $[\chi^\lambda,\,(\chi^\mu\times\chi^\nu)\uparrow^{S_{2^{k+1}}}]\neq0$, then $\lambda\in\Omega_2(2^{k+1})$.
\end{prop}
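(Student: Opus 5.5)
Our plan is to use the wreath product structure $P_{2^{k+1}}=P_{2^k}\wr C_2$ and Clifford theory. Write $B=P_{2^k}\times P_{2^k}$ for the base group, sitting inside the Young subgroup $Y=S_{2^k}\times S_{2^k}\leq S_{2^{k+1}}$. By construction of $P_{2^{k+1}}$, $B$ has index $2$ in $P_{2^{k+1}}$, and $P_{2^{k+1}}=B\langle\tau\rangle$, where $\tau=(1,2^k+1)(2,2^k+2)\cdots(2^k,2^{k+1})$ swaps the two factors. Let $W=Y\langle\tau\rangle\cong S_{2^k}\wr C_2$, so that $P_{2^{k+1}}\leq W\leq S_{2^{k+1}}$.

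The first step is to find a constituent of $(1_{P_{2^{k+1}}})\uparrow^W$ lying over $\chi^\mu\times\chi^\nu$. Since $\mu,\nu\in\Omega_2(2^k)$, the character $\chi^\mu\times\chi^\nu$ of $Y$ restricted to $B$ contains $1_B=1_{P_{2^k}}\times 1_{P_{2^k}}$. By Frobenius reciprocity, $\chi^\mu\times\chi^\nu$ is then a constituent of $(1_B)\uparrow^Y$. Because $\mu\neq\nu$, this character is not $\tau$-invariant (conjugation by $\tau$ sends it to $\chi^\nu\times\chi^\mu$), so $\psi:=(\chi^\mu\times\chi^\nu)\uparrow^W$ is irreducible.

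The second step is to check that $\psi$ is a constituent of $(1_{P_{2^{k+1}}})\uparrow^W$, or equivalently that $[\psi\downarrow_{P_{2^{k+1}}},1]\neq0$. By Mackey, using $W=Y P_{2^{k+1}}$ and $Y\cap P_{2^{k+1}}=B$, we get $\psi\downarrow_{P_{2^{k+1}}}=((\chi^\mu\times\chi^\nu)\downarrow_B)\uparrow^{P_{2^{k+1}}}$. Frobenius reciprocity then gives
\[
[\psi\downarrow_{P_{2^{k+1}}},1_{P_{2^{k+1}}}]=[(\chi^\mu\times\chi^\nu)\downarrow_B,1_B]=[\chi^\mu\downarrow_{P_{2^k}},1][\chi^\nu\downarrow_{P_{2^k}},1]>0.
\]
This step is where the hypothesis $\mu\neq\nu$ is used: without it, $\psi$ would split into two extensions and we would need to control which one appears.

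The third step is to pass to $S_{2^{k+1}}$ by transitivity of induction. We have $(1_{P_{2^{k+1}}})\uparrow^{S_{2^{k+1}}}=((1_{P_{2^{k+1}}})\uparrow^W)\uparrow^{S_{2^{k+1}}}$, and this contains $\psi\uparrow^{S_{2^{k+1}}}=(\chi^\mu\times\chi^\nu)\uparrow^{S_{2^{k+1}}}$. All characters involved are genuine characters, so every constituent $\chi^\lambda$ of $(\chi^\mu\times\chi^\nu)\uparrow^{S_{2^{k+1}}}$ is a constituent of the Sylow permutation character. Hence $\lambda\in\Omega_2(2^{k+1})$.

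The only point requiring care is the Mackey computation in the second step. There is a single $(Y,P_{2^{k+1}})$ double coset, since $YP_{2^{k+1}}=W$, so the restriction formula above is exact. Everything else is formal.
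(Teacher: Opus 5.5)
Your proof is correct and is essentially the paper's argument: Clifford theory makes $(\chi^\mu\times\chi^\nu)\uparrow^{S_{2^k}\wr C_2}$ irreducible because $\mu\neq\nu$, and Mackey with $YP_{2^{k+1}}=W$ and $Y\cap P_{2^{k+1}}=B$ gives the trivial constituent. Your transitivity-of-induction conclusion is just the Frobenius-reciprocal form of the paper's observation that $\psi$ is a constituent of $\chi^\lambda\downarrow_{S_{2^k}\wr C_2}$. One small precision: the Mackey computation itself does not use $\mu\neq\nu$; the hypothesis enters through the irreducibility of $\psi$ in your first step, which is what makes the ``equivalently'' in your second step and the conclusion of your third step valid.
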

\begin{proof}
Suppose $\lambda\in\mathcal{P}(2^{k+1})$ and $\mu,\nu\in\Omega_2(2^k)$ are as stated above. We have a natural embedding $S_{2^k}\times S_{2^k}\trianglelefteq S_{2^k}\wr C_2\leq S_{2^{k+1}}$ (where the first is a Young subgroup and $C_2$ acts on it by swapping the elements the copies of $S_{2^k}$ act on). Then $P_{2^k}\times P_{2^k}\in\mathrm{Syl}_2(S_{2^k}\times S_{2^k})$ and $P_{2^k}\wr C_2\in\mathrm{Syl}_2(S_{2^{k+1}})$. Since $S_{2^k}\times S_{2^k}\trianglelefteq S_{2^k}\wr C_2$ we can apply Clifford Theory to $\chi^\mu\times\chi^\nu$. The inertial subgroup of this character is $S_{2^k}\times S_{2^k}$ (since $\mu\neq\nu$), so $(\chi^\mu\times\chi^\nu)\uparrow^{S_{2^k}\wr C_2}$ is irreducible. This means that $(\chi^\mu\times\chi^\nu)\uparrow^{S_{2^k}\wr C_2}$ is an irreducible constituent of $\chi^\lambda\downarrow_{S_{2^k}\wr C_2}$. Since $(S_{2^k}\times S_{2^k})\cap (P_{2^k}\wr C_2)=P_{2^k}\times P_{2^k}$ and $(S_{2^k}\times S_{2^k})\cdot (P_{2^k}\wr C_2)=S_{2^k}\wr C_2$, we have that:
\[
[(\chi^\mu\times\chi^\nu)\uparrow^{S_{2^k}\wr C_2}\downarrow_{P_{2^k}\wr C_2},\,1_{P_{2^k}\wr C_2}]=
[(\chi^\mu\times\chi^\nu)\downarrow_{P_{2^k}\times P_{2^k}}\uparrow^{P_{2^k}\wr C_2},\,1_{P_{2^k}\wr C_2}]=
\]
\[
=[(\chi^\mu\times\chi^\nu)\downarrow_{P_{2^k}\times P_{2^k}},\,1_{P_{2^k}\times P_{2^k}}]\neq0
\]
since $\mu,\nu\in\Omega_2(2^k)$.
\end{proof}
The second proposition allows us to construct the elements of $\Omega_2(n)$ starting from $\Omega_2(2^k)$.
\begin{prop}\label{fund gen}
Let $n>0$ be an integer. Write $n=\sum_{i=1}^t 2^{a_i}$ with $a_i$ distinct (here we do not require any specific ordering). Let $1\leq j<t$ and $m=\sum_{i=1}^j 2^{a_i}$. Let $\lambda\in\mathcal{P}(n)$. Then $\lambda\in\Omega_2(n)$ if and only if there exist $\mu\in\Omega_2(m)$ and $\nu\in\Omega_2(n-m)$ such that $[\chi^\lambda,\,(\chi^\mu\times\chi^\nu)\uparrow^{S_n}]\neq0$.
\end{prop}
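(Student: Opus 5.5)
The key observation is that, since the $a_i$ are distinct, a Sylow $2$-subgroup of $S_n$ splits as a direct product of a Sylow $2$-subgroup of $S_m$ and one of $S_{n-m}$; after that the statement reduces to Mackey/Frobenius reciprocity together with the Littlewood--Richardson description of induction from Young subgroups.

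First I will justify the factorization. Let $Q_m$ and $Q_{n-m}$ be Sylow $2$-subgroups of $S_m$ and $S_{n-m}$. The $2$-adic valuation of $|S_n|$ is $\sum_i(2^{a_i}-1)=n-t$. The binary expansion of $m$ has exactly the $j$ digits $2^{a_1},\dots,2^{a_j}$, and that of $n-m$ has the remaining $t-j$ digits, because the $a_i$ are distinct. Hence $\nu_2(m!)+\nu_2((n-m)!)=(m-j)+(n-m-(t-j))=n-t$. So $Q:=Q_m\times Q_{n-m}\le S_m\times S_{n-m}\le S_n$ is a Sylow $2$-subgroup of $S_n$. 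Since $\Omega_2(n)$ does not depend on the choice of Sylow subgroup, I may compute with $Q$.

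Next, by transitivity of induction,
\[
(1_Q)\uparrow^{S_n}=\bigl((1_{Q_m}\times 1_{Q_{n-m}})\uparrow^{S_m\times S_{n-m}}\bigr)\uparrow^{S_n}.
\]
Induction to a direct product of groups is the outer tensor product of the inductions, so the inner character equals $(1_{Q_m})\uparrow^{S_m}\times(1_{Q_{n-m}})\uparrow^{S_{n-m}}$. Its decomposition is
\[
\sum_{\mu,\nu} a_\mu b_\nu\,\chi^\mu\times\chi^\nu ,
\]
where $a_\mu=[\chi^\mu,(1_{Q_m})\uparrow^{S_m}]$ and $b_\nu=[\chi^\nu,(1_{Q_{n-m}})\uparrow^{S_{n-m}}]$. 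By definition $a_\mu>0$ exactly when $\mu\in\Omega_2(m)$, and $b_\nu>0$ exactly when $\nu\in\Omega_2(n-m)$.

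Inducing to $S_n$ then gives
\[
(1_Q)\uparrow^{S_n}=\sum_{\mu\in\Omega_2(m),\,\nu\in\Omega_2(n-m)}a_\mu b_\nu\,(\chi^\mu\times\chi^\nu)\uparrow^{S_n}.
\]
All coefficients here are positive integers and every summand is a genuine character, so no cancellation can occur. Therefore $[\chi^\lambda,(1_Q)\uparrow^{S_n}]\neq 0$ if and only if $[\chi^\lambda,(\chi^\mu\times\chi^\nu)\uparrow^{S_n}]\neq0$ for some $\mu\in\Omega_2(m)$ and $\nu\in\Omega_2(n-m)$. This gives both implications at once.

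The only genuine point is the Sylow-order computation in the first step. That is where the distinctness of the $a_i$ is used: it guarantees there are no carries when adding $m$ and $n-m$ in binary. The paper's fixed $P_n$ is exactly such a product only when the summands are taken in the prescribed order, which is why I will invoke independence of the choice of Sylow subgroup rather than work with $P_n$ directly.
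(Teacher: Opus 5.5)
Your proof is correct and is essentially the paper's argument: the paper restricts $\chi^\lambda$ through the Young subgroup $S_m\times S_{n-m}$ to the Sylow subgroup $P_m\times P_{n-m}$, while you induce $1_Q$ up through the same intermediate subgroup, which is the same computation read through Frobenius reciprocity. Your Legendre-formula check that $Q_m\times Q_{n-m}$ has order $2^{n-t}$, the full $2$-part of $|S_n|$, supplies the one fact the paper only asserts, and it correctly identifies where the distinctness of the $a_i$ is used.
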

\begin{proof}
Consider the following subgroups $P_m\times P_{n-m}\leq S_m\times S_{n-m}\leq S_n$. $P_m\times P_{n-m}$ is a Sylow $2$-subgroup of $S_n$. This means that $[\chi^\lambda\downarrow_{P_m\times P_{n-m}},\,1_{P_m\times P_{n-m}}]\neq0$ if and only if $\chi^\lambda\downarrow_{S_m\times S_{n-m}}$ has an irreducible constituent whose restriction to $P_m\times P_{n-m}$ satisfies the same condition. The irreducible characters of $S_m\times S_{n-m}$ satisfying this condition are exactly those of the form $\chi^\mu\times\chi^\nu$ where $\mu\in\Omega_2(m)$ and $\nu\in\Omega_2(n-m)$.
\end{proof}
Let's use this proposition to determine the sets $\Omega_2(n)\cap\mathcal{H}(n)$ and $\Omega_2(n)\cap\mathcal{AH}(n)$. In both cases we choose to parametrize these sets using the number of parts of the partitions.
\begin{lem}\label{H}
Let $n$ be a positive integer and $n=2^{a_1}+2^{a_2}+\cdots+2^{a_t}$ its binary expansion. We have that $(n-(l-1),\,1^{l-1})\in\Omega_2(n)\cap\mathcal{H}(n)$ if and only if $l\leq t$.
\end{lem}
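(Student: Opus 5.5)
We prove the two directions separately, using the fact that hook characters restrict to Young subgroups in a very controlled way. For $\mu=(m-x,1^x)$ and $\nu=(n-m-y,1^y)$, the Littlewood--Richardson rule shows that $(\chi^\mu\times\chi^\nu)\uparrow^{S_n}$ contains exactly two hook constituents, $(n-x-y,1^{x+y})$ and $(n-x-y-1,1^{x+y+1})$. This is the classical Pieri-type fact $\chi^{(n-l,1^l)}\downarrow_{S_m\times S_{n-m}}=\sum_{x+y\in\{l,l-1\}}\chi^{(m-x,1^x)}\times\chi^{(n-m-y,1^y)}$, where terms with impossible hooks are omitted. It also shows that any constituent of an induced character that is a hook must come from two hooks, since a non-hook $\mu$ is not contained in a hook $\lambda$.

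\emph{Base case $n=2^a$ ($t=1$).} We must show that the only hook in $\Omega_2(2^a)$ is the trivial partition $(n)$. Our first idea was to apply Lemma~\ref{temp}, but it only yields odd transpositions lying in $C_t$ when the first column is long. So we argue instead by induction on $a$ using $P_{2^{a}}=P_{2^{a-1}}\wr C_2$. The case $a=0$ is trivial, and for $a=1$ we have $P_2=S_2$, so only $(2)$ occurs. For $a\ge 2$, restrict $\chi^{(n-l,1^l)}$ to $S_{2^{a-1}}\times S_{2^{a-1}}$. By induction, the only constituent with nonzero $P\times P$-fixed points is $\chi^{(2^{a-1})}\times\chi^{(2^{a-1})}$, and this occurs only when $l\in\{0,1\}$. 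For $l=1$ it appears with multiplicity one. We compute how $C_2$ acts on this one-dimensional fixed space inside $\chi^{(n-1,1)}$: the natural permutation module decomposes as the trivial module plus $S^{(n-1,1)}$, and the $P\times P$-fixed vectors in the permutation module are spanned by the indicator vectors of the two halves. The swap exchanges these two vectors, so it acts on their difference, which lies in $S^{(n-1,1)}$, by $-1$. Hence $(n-1,1)\notin\Omega_2(2^a)$ for $a\ge1$. This yields $\Omega_2(2^a)\cap\mathcal H(2^a)=\{(2^a)\}$.

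\emph{General $n$.} We induct on $t$ using Proposition~\ref{fund gen} with $m=2^{a_1}$. By that proposition, a hook $\lambda=(n-(l-1),1^{l-1})$ lies in $\Omega_2(n)$ if and only if it is a constituent of $(\chi^\mu\times\chi^\nu)\uparrow$ for some $\mu\in\Omega_2(2^{a_1})$ and $\nu\in\Omega_2(n-2^{a_1})$. Since $\lambda$ is a hook, both $\mu$ and $\nu$ must be hooks. By the base case $\mu=(2^{a_1})$, so $x=0$, and by induction $\nu$ has $y+1\le t-1$ parts, that is $y\le t-2$. The hook constituents then have $l-1\in\{y,y+1\}$, and as $y$ ranges over $0,\dots,t-2$ this gives exactly $l\le t$.

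The step we expect to be the main obstacle is the base case, namely the sign computation for the swap acting on the fixed line in $(n-1,1)$. It is exactly this step that forces hooks other than the trivial one out of $\Omega_2(2^a)$, and the inductive step reduces everything to it.
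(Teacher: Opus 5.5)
Your proof is correct. Its inductive step is essentially the paper's: both apply Proposition~\ref{fund gen}, note that a hook constituent forces both factors to be hooks, and bound the number of parts via $\lambda'_1\le\mu'_1+\nu'_1$. You split off $2^{a_1}$ and induct on $n-2^{a_1}$, whereas the paper splits off $2^{a_t}$ and inducts on $N=n-2^{a_t}$; this is immaterial, since Proposition~\ref{fund gen} allows any ordering. Your exact description of the two hook constituents of a product of two hooks handles both directions at once. The paper instead proves the converse separately, by exhibiting $\mu=(N-(l-2),1^{l-2})$ with the all-$1$ filling.

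The genuine difference is the base case. The paper simply quotes \cite[Theorem~1.1]{G17} for $\Omega_2(2^a)\cap\mathcal{H}(2^a)=\{(2^a)\}$. You prove it from scratch by induction on $a$, using $P_{2^a}=(P_{2^{a-1}}\times P_{2^{a-1}})\rtimes C_2$, the same structure exploited in Proposition~\ref{fund}:
\begin{itemize}
\item for $l\ge 2$, no constituent of the restriction to $S_{2^{a-1}}\times S_{2^{a-1}}$ has $P\times P$-fixed points;
\item for $l=1$, the swap acts by $-1$ on the fixed line spanned by $e_A-e_B$.
\end{itemize}
This makes the lemma self-contained, using only Clifford-type reasoning and the orbit structure of $P_{2^a}$, at the cost of a few extra lines. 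The paper's citation is shorter, and the cited result gives more information than is needed here.

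The $l=1$ case can be done even more directly. $P_{2^a}$ is transitive on $\{1,\dots,2^a\}$, so $[\chi^{(2^a-1,1)}\downarrow_{P_{2^a}},1_{P_{2^a}}]$ equals the number of $P_{2^a}$-orbits minus one, which is $0$. This needs neither the induction hypothesis nor the explicit sign computation.
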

\begin{proof}
We prove the result by induction on the number of elements in the binary expansion of $n$. If $n$ is a power of $2$ the result follows from \cite[Theorem~1.1]{G17}. Let $n=2^{a_1}+2^{a_2}+\cdots+2^{a_t}$ with $t\geq2$. Let $N=2^{a_1}+\cdots+2^{a_{t-1}}$. By Proposition~\ref{fund gen} and the Littlewood-Richardson Rule, $\lambda\in\Omega_2(n)$ if and only if there exist $\mu\in\Omega_2(N)$ and $\nu\in\Omega_2(2^{a_t})$ that are both subpartitions of $\lambda$ and such that $[\lambda\setminus\mu]$ has a L-R filling of weight $\nu$. If $\lambda\in\mathcal{H}(n)$ we must have $\mu\in\mathcal{H}(N)$ and $\nu\in\mathcal{H}(2^{a_t})$. Therefore, $\mu=(N-(l_1-1),\,1^{l_1-1})$ with $l_1\leq t-1$ and $\nu$ is the trivial partition. This implies that $\lambda$ has at most $t$ parts. Vice versa, if $\lambda=(n-(l-1),\,1^{l-1})$ with $2\leq l\leq t$ (the trivial partition is clearly in $\Omega_2(n)$), then we consider $\mu=(N-(l-2),\,1^{l-2})$ and the L-R filling consisting of filling each box with the number $1$.
\end{proof}
Let us observe here that, in the case $n=2^k$ we have $\Omega_2(2^k)\cap\mathcal{H}(2^k)=\{(2^k)\}$. This fact will be used in subsequent proofs without being explicitly recalled.
\begin{lem}\label{AH}
Let $n\geq4$ be a positive integer and $n=2^{a_1}+2^{a_2}+\cdots+2^{a_t}$ its binary expansion, with $a_1>\cdots> a_t$. We have that $(n-l,\,2,\,1^{l-2})\in\Omega_2(n)\cap\mathcal{AH}(n)$ if and only if $l\leq a_1+t-1$.
\end{lem}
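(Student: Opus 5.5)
I will argue by induction, first for $n=2^k$ and then on the number $t$ of binary digits. The partition $(n-l,2,1^{l-2})$ has $l$ parts, so for $n=2^k$ the claim reads: the almost-hook with $l$ parts lies in $\Omega_2(2^k)$ if and only if $l\le k$. The base case is $k=2$, where $(2,2)\in\Omega_2(4)$ by Proposition~\ref{even} and the only other candidate, $(2,1^2)$, is excluded by Proposition~\ref{big}.

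A combinatorial fact will be used throughout. Suppose $\lambda$ is an almost-hook, $\mu\subseteq\lambda$, and $[\lambda\setminus\mu]$ has an L-R filling of weight $\nu$. Then at most one of $\mu,\nu$ is an almost-hook (the other being a hook).
\begin{itemize}
\item If $\mu$ is an almost-hook, then $(2,2)\in[\mu]$, so $[\lambda\setminus\mu]$ lies in the first row and first column.
\item An entry $2$ cannot sit in the first row: reading right to left it would need an earlier $1$ to its right in the same row, contradicting weak increase along the row.
\item Hence all $2$'s lie in the first column, where entries strictly increase, so there is at most one $2$.
\item But an almost-hook $\nu$ has $\nu_2=2$, giving a contradiction.
\end{itemize}
Also, by the remark after the L-R rule, $\lambda'_1\le\mu'_1+\nu'_1$, and the only hook in $\Omega_2(2^a)$ is the trivial partition.

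For the upper bound when $n=2^{k+1}$, restrict to $S_{2^k}\times S_{2^k}\le S_{2^k}\wr C_2$. A nonzero $P_{2^{k+1}}$-fixed vector in $S^\lambda$ is in particular fixed by $P_{2^k}\times P_{2^k}$. Hence $\chi^\lambda\downarrow_{S_{2^k}\times S_{2^k}}$ has a constituent $\chi^\mu\times\chi^\nu$ with $\mu,\nu\in\Omega_2(2^k)$. By the fact above, either both $\mu,\nu$ are trivial, giving at most $2$ parts, or one is trivial and the other is an almost-hook with at most $k$ parts by induction. In either case $\lambda$ has at most $k+1$ parts.

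For the converse with $3\le l\le k+1$, take $\mu=(2^k-(l-1),2,1^{l-3})\in\Omega_2(2^k)$ by induction and $\nu=(2^k)$, which is distinct from $\mu$. Place one $1$ below the first column of $\mu$ and the rest in the first row; this is an L-R filling, so Proposition~\ref{fund} gives $\lambda\in\Omega_2(2^{k+1})$. For $l=2$, use $\mu=(2^k)$ and $\nu=(2^k-2,2)$ with the column filling of Remark~\ref{filling}.

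For general $n$ with $t\ge2$, apply Proposition~\ref{fund gen} with $N=2^{a_1}+\dots+2^{a_{t-1}}$ and $2^{a_t}$, and apply the same dichotomy to $\mu\in\Omega_2(N)$ and $\nu\in\Omega_2(2^{a_t})$.
\begin{itemize}
\item If $\mu$ is an almost-hook, then $\nu=(2^{a_t})$. By induction $\mu$ has at most $a_1+t-2$ parts, so $\lambda$ has at most $a_1+t-1$.
\item If $\mu$ is a hook, it has at most $t-1$ parts by Lemma~\ref{H}. Then $\nu$ is trivial, or an almost-hook with at most $a_t\le a_1$ parts. Either way $\lambda$ has at most $a_1+t-1$ parts.
\end{itemize}
For existence, take $\mu$ the almost-hook with $l-1$ parts in $\Omega_2(N)$, so that $l-1\le a_1+t-2$, and $\nu$ trivial, with the same filling as before. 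For $l=2$, take $\mu$ trivial and $\nu=(2^{a_t}-2,2)$.

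The main obstacle is the small-$n$ boundary cases. The induction hypothesis requires $N\ge4$ (and $2^{a_t}\ge4$ for the $l=2$ construction), and $\mu$ must be a genuine partition, e.g. $N-(l-1)\ge2$. I would handle these separately: when $N<4$ or $a_t\le1$, use the power-of-two case on $2^{a_1}$ combined with the remaining bits via hooks from Lemma~\ref{H}. I will check directly the few tiny $n$ (such as $4,5,6,7$). I also need to verify that the fixed-vector restriction argument in the power-of-two case is rigorous; it is the same reasoning as in Proposition~\ref{fund gen}, since $P_{2^k}\times P_{2^k}\le P_{2^{k+1}}$.
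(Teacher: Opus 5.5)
Your proof is correct. For $t\ge 2$ it is the paper's argument:
\begin{itemize}
\item the same split $n=N+2^{a_t}$ through Proposition~\ref{fund gen};
\item the same observation that an almost-hook $\mu$ forces a hook weight (your row/column argument justifies the sentence the paper states without proof);
\item the same witness $\mu=(N-(l-1),2,1^{l-3})$ and trivial $\nu$, filled with $1$'s.
\end{itemize}
The real difference is the base case $n=2^k$. The paper simply invokes \cite[Theorem~1.3]{GuL}, while you prove it by induction on $k$. You get the upper bound from the fact that a $P_{2^{k+1}}$-fixed vector is also $P_{2^k}\times P_{2^k}$-fixed. Hence some constituent $\chi^\mu\times\chi^\nu$ of $\chi^\lambda\downarrow_{S_{2^k}\times S_{2^k}}$ has $\mu,\nu\in\Omega_2(2^k)$, and your dichotomy then gives $\lambda'_1\le k+1$. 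You get existence from Proposition~\ref{fund} with $\nu$ the trivial partition, which differs from $\mu$. This is sound, and it makes the lemma self-contained apart from \cite{G17}, which both versions need for the hooks in $\Omega_2(2^k)$. The citation only buys the paper brevity.

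A few local repairs are needed.
\begin{itemize}
\item \textbf{The $l=2$ filling for $n=2^{k+1}$ fails as written.} With $\mu$ trivial, $[(2^{k+1}-2,2)\setminus\mu]$ has one cell in each column. So the column filling of Remark~\ref{filling} has trivial weight, which gives $\mu=\nu$, and Proposition~\ref{fund} does not apply. Instead, put $2$'s in the two cells of row $2$, or swap the roles of $\mu$ and $\nu$. More simply, $(2^{k+1}-2,2)\in\Omega_2(2^{k+1})$ directly by Proposition~\ref{even}.
\item \textbf{The $l=2$ case for general $n$, including $a_t\le 1$.} The paper uses Proposition~\ref{even} when $n$ is even. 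When $n$ is odd it uses $(n-3,2)\in\Omega_2(n-1)$ and adds one box to the first row. Alternatively, take $\mu=(N-2,2)$, which lies in $\Omega_2(N)$ by your induction hypothesis, together with trivial $\nu$.
\item \textbf{Your worry about $N<4$ is vacuous.} Since $n\ge 4$ forces $a_1\ge 2$, you get $N\ge 2^{a_1}\ge 4$, and also $N-(l-1)\ge 2^{a_1}-a_1\ge 2$ automatically.
\item \textbf{The base case remark.} $(2,1^2)$ is a hook, so for $n=4$ the only almost-hook is $(2,2)$; the base case is unaffected.
\end{itemize}
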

\begin{proof}
We proceed by induction, as in the previous lemma. The base case is given by \cite[Theorem~1.3]{GuL}. Let $n=2^{a_1}+2^{a_2}+\cdots+2^{a_t}$ with $t\geq2$. Let $N=2^{a_1}+\cdots+2^{a_{t-1}}$. By Proposition~\ref{fund gen} and the L-R Rule, $\lambda\in\Omega_2(n)$ if and only if there exist $\mu\in\Omega_2(N)$ and $\nu\in\Omega_2(2^{a_t})$ that are both subpartitions of $\lambda$ and such that $[\lambda\setminus\mu]$ has a Littlewood-Richardson filling of weight $\nu$. If $\lambda\in\mathcal{AH}(n)$ we must have $\mu\in\mathcal{AH}(N)\cup\mathcal{H}(N)$ and $\nu\in\mathcal{AH}(2^{a_t})\cup\mathcal{H}(2^{a_t})$. Also, at least one of the two must be a hook partition: if $\mu\in\mathcal{AH}(N)$, then any L-R filling of $[\lambda\setminus\mu]$ has weight a hook partition. If $\nu\in\mathcal{H}(2^{a_t})\cap\Omega_2(2^{a_t})$, then it must be the trivial partition. At the same time, $\mu$ has at most $a_1+t-2$ parts, so $\lambda$ has at most $a_1+t-1$. If $\nu\in\mathcal{AH}(2^{a_t})\cap\Omega_2(2^{a_t})$ (which implies $a_t\geq 2$, otherwise there are no almost-hook partitions), then $\nu$ has at most $a_t$ parts. In this case $\mu\in\mathcal{H}(N)\cap\Omega_2(n)$, so $\mu$ has at most $t-1$ parts. Therefore, $\lambda$ has at most $a_t+t-1\leq a_1+t-1$ parts. Vice versa, if $\lambda=(n-l,\,2,\,1^{l-2})$ with $3\leq l\leq a_1+t-1$, then take $\mu=(N-(l-1),\,2,\,1^{l-3})$ and the Littlewood-Richardson filling of $[\lambda\setminus\mu]$ obtained by filling each box with $1$. The partition $(n-2,\,2)$ is in $\Omega_2(n)$, by Proposition~\ref{even} and the L-R Rule when $n$ is odd.
\end{proof}

This second result shows the maximality of the bound that we will prove in Theorem~\ref{main}.

\section{The power of 2 case}
To simplify notation we will define the following sets of partitions.
\begin{define}
Let $n$ and $k$ be positive integers. We let $\Delta(n,k)$ be the subset of $\mathcal{P}(n)\setminus\mathcal{H}(n)$ of partitions with at most $k$ parts.
\end{define}
We will proceed in the natural way suggested by the structure of the Sylow $2$-subgroups of $S_n$ and by Propositions \ref{fund} and \ref{fund gen}: in this section we will prove the desired result for powers of $2$, and then generalize to all positive integers in the next. For the first step we will use the two following lemmas.

\begin{lem}\label{1}
 Let $n\geq4$ be an integer; then
\[
\Delta(2^n,4)\subseteq\Omega_2(2^n).
\]
\end{lem}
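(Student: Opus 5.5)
Fix $n\geq 4$ and a non-hook $\lambda\in\mathcal{P}(2^{n})$ with at most $4$ parts. The plan is to argue by induction on $n$, using Proposition~\ref{fund} as the main engine. Given $\lambda$, we want two \emph{distinct} partitions $\mu,\nu\in\Omega_2(2^{n-1})$ with $c^\lambda_{\mu\nu}\neq 0$. For the building blocks we have three sources, valid for every $n-1\geq 3$ and not only through the inductive hypothesis:
\begin{itemize}
\item the trivial partition $(2^{n-1})$;
\item partitions of $2^{n-1}$ with all parts even (Proposition~\ref{even});
\item almost-hooks $(2^{n-1}-l,2,1^{l-2})$ with $l\leq n-1$ (Lemma~\ref{AH} with $t=1$).
\end{itemize}
The inductive hypothesis adds all non-hook partitions of $2^{n-1}$ with at most $4$ parts, provided $n-1\geq 4$. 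For the base case $n=4$, i.e.\ $2^4=16$, the hypothesis is not available, and only the explicit sources above (together with $(1)$ and the known small cases $\Omega_2(8)$) may be used.

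The key steps are as follows.

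\textbf{Step 1: the generic construction.} Write $\lambda=(\lambda_1,\lambda_2,\lambda_3,\lambda_4)$, allowing zero entries. Choose $\mu$ by removing half of the boxes from the diagram so that $\mu$ has all parts even and $|\mu|=2^{n-1}$. Concretely, take $\mu_i$ to be roughly $\lambda_i/2$ rounded to an even number, adjusted so that the sum is exactly $2^{n-1}$ and $\mu\subseteq\lambda$. Then fill $[\lambda\setminus\mu]$ by the column numbering of Remark~\ref{filling}. Its weight $\nu$ is the conjugate of the column-length sequence of the skew shape, so $\nu$ has at most $4$ parts and $|\nu|=2^{n-1}$.

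\textbf{Step 2: checking $\nu$.} We need $\nu\in\Omega_2(2^{n-1})$ and $\nu\neq\mu$. The partition $\nu$ is fine in any of these situations:
\begin{itemize}
\item $\nu$ is a non-hook with at most $4$ parts, by induction;
\item $\nu$ is trivial;
\item $\nu$ has even parts.
\end{itemize}
The bad cases are when $\nu$ is a nontrivial hook, or when $\nu=\mu$. When $\nu$ is a hook, the skew shape is a single row plus a few isolated cells, and we instead adjust $\mu$ to absorb those cells. For instance, we take $\mu$ to be an almost-hook $(2^{n-1}-l,2,1^{l-2})$ with $l\leq 3$, or an even partition with a different row distribution, so that the skew shape becomes a horizontal strip and $\nu=(2^{n-1})$. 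When $\nu=\mu$, we shift two boxes between rows of $\mu$, keeping the parts even, which breaks the equality. Since $2^{n-1}\geq 8$ and there are at most $4$ rows, there is enough room for these moves.

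\textbf{Step 3: a cleaner organisation of the cases.} A tidier route is to split on $\lambda_1$.
\begin{itemize}
\item If $\lambda_1\geq 2^{n-1}$, take $\nu=(2^{n-1})$ and $\mu=\lambda$ minus a horizontal strip of $2^{n-1}$ boxes taken from the ends of the rows. We need $\mu$ to be a non-hook with at most $4$ parts, or even, or an admissible almost-hook, and $\mu\neq\nu$. It fails only when $\mu$ is a nontrivial hook, which corresponds to very specific shapes of $\lambda$ near $(2^{n-1}+a,b,1,\dots)$; these we treat by hand with an almost-hook $\mu$.
\item If $\lambda_1<2^{n-1}$, we have $\lambda_2,\lambda_3,\lambda_4$ large, and we use the even construction of Step 1, where hooks cannot arise for $\nu$ because the column lengths are spread over several rows.
\end{itemize}

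I expect the main obstacle to be the base case $n=4$ (that is, $16$) together with the boundary cases where the natural $\mu$ or $\nu$ degenerates into a nontrivial hook. At $16$ there is no inductive hypothesis, so every $\nu\in\mathcal{P}(8)$ used must be certified from $\Omega_2(8)$ directly. There I would first try to use only even partitions, the trivial partition, and the almost-hooks $(5,2,1),(6,2)$, and then verify the finitely many $4$-row non-hooks of $16$ by listing explicit pairs.
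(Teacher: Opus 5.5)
\textbf{The gap is in your base case $n=4$.} Your inductive step is only sketched: the choice of an even $\mu$ with parts ``roughly $\lambda_i/2$'' and the unspecified hand repairs would need to be made precise. It is repairable, though, because the inductive hypothesis supplies all of $\Delta(2^{n-1},4)$. The paper does this with one concrete rule: $[\mu]$ is the first $2^{n-1}$ boxes of $[\lambda]$ read column by column, and the two failure modes ($\mu=\nu$, or $\nu$ a non-trivial hook) are fixed by moving a single box.

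At $n=4$ you propose to certify every $\lambda\in\Delta(16,4)$ by an explicit pair $\mu\neq\nu$ in $\Omega_2(8)$. This cannot work. Proposition~\ref{fund} is only a sufficient condition: it discards the constituents coming from $\chi^\mu\times\chi^\mu$, and at $16$ these are indispensable. Take $\lambda=(9,7)\in\Delta(16,4)$.
\begin{itemize}
\item Any $\mu,\nu$ with $c^\lambda_{\mu\nu}\neq0$ lie inside $\lambda$, so have at most two rows.
\item The two-row elements of $\Omega_2(8)$ are only $(8),(6,2),(4,4)$. Indeed $(7,1)$ is excluded by Lemma~\ref{H}, and $(5,3)$ by $k_8=1$ in Theorem~\ref{main} (directly: $P_8$ has three orbits on both $2$-subsets and $3$-subsets).
\item All three distinct pairs give coefficient $0$. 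With $(8)$, the Pieri rule needs $\lambda_2\leq\mu_1$, but $7>6$ and $7>4$. In $[\lambda\setminus(4,4)]$ the five boxes of the first row must all be $1$. That forces the three boxes below them to be at least $2$, while weight $(6,2)$ has only two such entries.
\end{itemize}
Since $(9,7)$ is not an even partition, nothing in your toolkit reaches it.

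\textbf{Yet $(9,7)\in\Omega_2(16)$.} Write $\chi^{(6,2)}$ as the permutation character on $2$-subsets minus that on points. Since $P_8=D_8\wr C_2$ has $3$ orbits on $2$-subsets and one on points, $[\chi^{(6,2)}\downarrow_{P_8},1_{P_8}]=2$. So for $V$ affording $\chi^{(6,2)}$, both extensions of $\chi^{(6,2)}\times\chi^{(6,2)}$ to $S_8\wr C_2$ have nonzero $P_8\wr C_2$-fixed vectors. These are the symmetric, respectively antisymmetric, tensors in $V^{P_8}\otimes V^{P_8}$, where $\dim V^{P_8}=2$. Hence every constituent of $(\chi^{(6,2)}\times\chi^{(6,2)})\uparrow^{S_{16}}$ lies in $\Omega_2(16)$. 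This includes $\chi^{(9,7)}$: fill $[(9,7)\setminus(6,2)]$ with $1,1,1$ in row one and $1,1,1,2,2$ in row two.

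So for the base case you need either a direct computation, as the paper does with GAP, or a strengthened Proposition~\ref{fund}. Such a version must also treat $\mu=\nu$ and track the multiplicity of the trivial character in $\chi^\mu\downarrow_{P_{2^k}}$. When that multiplicity is $1$, only one of the two extensions contributes, so this refinement needs genuine care.
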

\begin{proof}
We prove this lemma by induction. The base case $n=4$ has been checked using \cite{GAP}. Let $n>4$ and $\lambda\in\Delta(2^n,4)$. We will prove that $\lambda\in\Omega_2(2^n)$ by using Proposition~\ref{fund} and the Littlewood-Richardson Rule: it is sufficient to find a subpartition $\mu\in\Delta(2^{n-1},4)$ of $\lambda$ and a L-R filling of $[\lambda\setminus\mu]$ of weight $\nu\in\Delta(2^{n-1},4)\cup\{(2^{n-1})\}$, such that $\mu\neq\nu$.\newline
\indent Consider the following algorithm: mark the first $2^{n-1}$ boxes of $[\lambda]$ going from top to bottom, from left to right. The marked boxes form the Young diagram of a subpartition $\mu\in\mathcal{P}(2^{n-1})$ of $\lambda$. Furthermore, since $\lambda\notin\mathcal{H}(2^n)$, we have that $\mu\notin\mathcal{H}(2^{n-1})$ (box $(2,\,2)$ is necessarily marked); therefore $\mu\in\Delta(2^{n-1},4)$, as desired. We then fill $[\lambda\setminus\mu]$ with the L-R filling described in Remark~\ref{filling} and call it's weight $\nu\in\mathcal{P}(2^{n-1})$. If $\nu\in\Delta(2^{n-1},4)\cup\{(2^{n-1})\}$ and $\mu\neq\nu$, then we have obtained the desired result.\newline
\indent Let's suppose that $\mu=\nu$. This implies that $\nu'_1=\mu'_1=\lambda'_1$, which means that all columns of $[\mu]$, except possibly the last, must have $\lambda'_1$ boxes. If $\lambda'_1=1,\,2,\,4$ then, since $\lambda'_1$ divides $2^{n-1}$ and $\mu=\nu$, we have that:
\[
\lambda\in\{(2^n),\,(2^{n-1},\,2^{n-1}),\,(2^{n-2},\,2^{n-2},\,2^{n-2},\,2^{n-2})\}\subseteq\Omega_2(2^n)
\] (by Proposition~\ref{even}). Suppose $\lambda'_1=3$; if the last column of $[\mu]$ is made of two cells, then $[\nu]$ has a column with just one cell; if the last column of $[\mu]$ has one cell, then $[\nu]$ has a column with two cells. This means that if $\lambda'_1=3$ then $\mu\neq\nu$. \newline
\indent Suppose $\nu$ is a non-trivial hook partition. For this to occur, $[\lambda\setminus\mu]$ must have a single column with more than one box. Every column of $[\lambda\setminus\mu]$ from the second onward contains all the corresponding boxes in the respective column of $[\lambda]$, due to how the algorithm is defined. For this reason, the column of $[\lambda\setminus\mu]$ with more than one box must be the first or the second. \newline
\indent If the column is the first, we modify the output of the algorithm by marking the last unmarked box in this column, and unmarking the last box of the previous one. We maintain the same filling for all previously unmarked boxes, and input the number $2$ in the newly unmarked one. This is a L-R filling. This gives us two new partitions $\mu^0$ and $\nu^0$. Now $[\mu^0]$ has at least $5$ columns of which all but the last two have at least $2$ boxes. Meanwhile, $[\nu^0]$ has exactly two columns with more than one box. This means that both partitions are non hook partitions, and they are distinct. \newline
\indent If instead the column of $[\lambda\setminus\mu]$ is the second one, then we just replace the $1$ in the first column with a $2$, and conclude as before.
\end{proof}

The next lemma will allow us to increase the number of parts as we increase $n$.
\begin{lem}\label{2}
Let $n\geq k\geq 4$ be integers such that $\Delta(2^n,k)\subseteq\Omega_2(2^n)$ and $\Delta(2^{n+1},k)\subseteq\Omega_2(2^{n+1})$; then we have that
\[
\Delta(2^{n+1},k+1)\subseteq\Omega_2(2^{n+1}).
\]
\end{lem}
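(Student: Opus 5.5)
Fix $\lambda\in\Delta(2^{n+1},k+1)$. If $\lambda$ has at most $k$ parts, the second hypothesis already gives $\lambda\in\Omega_2(2^{n+1})$, so we may assume $\lambda'_1=k+1$. As in Lemma~\ref{1}, we will use Proposition~\ref{fund}. The goal is to find a subpartition $\mu$ of $\lambda$ with $\mu\in\Delta(2^n,k)\cup\{(2^n)\}$, together with an L-R filling of $[\lambda\setminus\mu]$ of weight $\nu\in\Delta(2^n,k)\cup\{(2^n)\}$, such that $\mu\neq\nu$. The first hypothesis, $\Delta(2^n,k)\subseteq\Omega_2(2^n)$, together with $(2^n)\in\Omega_2(2^n)$, then guarantees $\mu,\nu\in\Omega_2(2^n)$. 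Since $\lambda'_1\le\mu'_1+\nu'_1$, the $k+1$ rows must be split between $\mu$ and $\nu$, and neither may use all of them.

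\emph{Construction.} Take $\mu$ to consist of the first $k$ rows of $\lambda$ truncated so that $|\mu|=2^n$. Concretely, mark boxes column by column, from left to right, using only rows $1,\dots,k$, and stop after $2^n$ boxes. This is possible because $\lambda$ has $k+1$ parts and $\lambda_{k+1}\le 2^{n+1}/(k+1)$, so rows $1,\dots,k$ contain at least $2^{n+1}-2^{n+1}/(k+1)\ge 2^n$ boxes. Then $\mu'_1\le k$. Next fill $[\lambda\setminus\mu]$ by the column filling of Remark~\ref{filling} and call its weight $\nu$. The parts of $\nu$ are bounded by the column lengths of $[\lambda\setminus\mu]$. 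A column of $[\lambda\setminus\mu]$ can have $k+1$ boxes only if it is an entire column of $\lambda$ untouched by the marking. But the marking fills columns of rows $1..k$ in order from the left, so the untouched columns lie to the right, where the column length is at most $\lambda'_j$ for large $j$. Hence every column of $[\lambda\setminus\mu]$ of length $k+1$ would be a full column to the right of a partially marked column. To force $\nu'_1\le k$, I will instead adjust the construction so that box $(k+1,1)$ and, more generally, row $k+1$ lies in the skew part while each column of the skew part misses at least one box. The cleanest choice is to mark column-by-column from the left \emph{all} rows of each column, as in Lemma~\ref{1}. Since $\lambda'_1=k+1\le n+1$ and the first column of $\lambda$ has at most $k+1<2^n$ boxes, $\mu$ contains the whole first column, so $\mu'_1=k+1$. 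This is too many rows, so this variant alone does not work, which is why the row-truncated marking is the primary choice.

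\emph{Adjustment and main obstacle.} With the row-truncated $\mu$, the skew diagram $[\lambda\setminus\mu]$ consists of row $k+1$ of length $\lambda_{k+1}$, plus the tail parts of rows $1..k$ beyond the marked columns. The danger is a column of the skew diagram of length $k+1$. I would remove it by a local exchange as in Lemma~\ref{1}: move one box from the bottom of such a column into $\mu$, and release a box at the end of the last marked column. This keeps $|\mu|=2^n$, keeps $\mu'_1\le k$, and yields an L-R filling by renumbering, placing a $2$ where needed. The remaining checks are the following.
\begin{itemize}
\item $\mu$ and $\nu$ are not nontrivial hooks. For $\mu$ this holds because $\lambda$ is not a hook and $\mu$ contains $(2,2)$ once $k\ge4$ and $2^n\ge16$. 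For $\nu$ a hook, I would repeat the two-column fix from Lemma~\ref{1}, or alternatively take $\nu=(2^n)$ when the skew part is a horizontal strip.
\item $\mu\neq\nu$. This is handled as in Lemma~\ref{1} via a column-count parity argument, or by swapping a single box between the last two columns when equality occurs.
\end{itemize}
I expect the main obstacle to be the case analysis ensuring that $\nu$ has at most $k$ parts and is not a nontrivial hook. This is where the hypothesis $n\ge k$ should enter: it gives $2^n$ large relative to $k+1$, leaving enough columns to perform the exchanges.
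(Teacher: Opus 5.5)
\paragraph{Verdict.} There is a genuine gap: your construction of $\mu$ fails at the step that decides the whole proof, and the proposed repair does not fix it. Your reduction to $\lambda'_1=k+1$ and your plan via Proposition~\ref{fund} do agree with the paper.

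\paragraph{Why the column-by-column marking fails.} In an L-R filling the columns strictly increase, so $\nu'_1$ is at least the length of the longest column of $[\lambda\setminus\mu]$. Every column $j\le\lambda_{k+1}$ of $\lambda$ has $k+1$ cells. Hence $\nu$ can have at most $k$ parts only if $\mu$ contains the top cell $(1,j)$ of each such column, i.e.\ only if $\mu_1\ge\lambda_{k+1}$. Your column-by-column marking of rows $1,\dots,k$ gives $\mu_1=\lceil 2^n/k\rceil$ whenever $\lambda_{k+1}$ is larger than that. Since $\lambda_{k+1}$ can be close to $2^{n+1}/(k+1)$, the number of full columns of length $k+1$ that survive in the skew diagram grows with $n$. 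Concretely, for $n=k=4$ and $\lambda=(8,6^4)$ you get $\mu=(4^4)$, and columns $5$ and $6$ of $[\lambda\setminus\mu]$ have five cells. The column filling then has weight $(8,2^4)\notin\Delta(16,4)\cup\{(16)\}$, and no other filling can do better.

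\paragraph{Why the repair does not work.} Adding ``a box from the bottom of such a column'' to $\mu$ does not give a Young diagram; the only admissible addition is the cell $(1,\mu_1+1)$. You would therefore need $\lambda_{k+1}-\mu_1$ such moves, each releasing a cell from the marked region. That is a global change of $\mu$, not a local exchange. None of the later checks ($\nu'_1\le k$, $\nu$ not a nontrivial hook, $\mu\neq\nu$) is carried out for the resulting shape.

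\paragraph{The missing idea.} You need to make $\mu$ cover the first row before anything else. The paper marks the first $k$ cells of column $1$ and the first two of column $2$, then continues row by row, left to right. A counting argument shows this touches every column of $\lambda$ with at least $k$ cells. Suppose $t$ were the first untouched such column. Then $t\ge4$, because $k+2\le n+2<2^n$. The marked cells would number $k+t-1=2^n$. The unmarked cells in the first $t$ columns would number at least $1+(k-2)+(k-1)(t-3)+k$, which exceeds $2^n$, a contradiction.

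\paragraph{What that fact buys.} It gives $\nu'_1\le k$ directly. It also drives the proof that $\mu\neq\nu$. If they had the same number of parts, column $2$ of $\lambda$ would have $k+1$ cells with only two marked. Then $\nu$ has two columns of length at least $3$ (of lengths $k$ and $k-1$), while $\mu$ has only one. The hook case is settled by changing a single $1$ into a $2$. The configuration where $[\lambda\setminus\mu]$ lies in one row plus one column is excluded by a short counting inequality.

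Your two bullet points defer all of this to ``as in Lemma~\ref{1}''. The arguments there use $\lambda'_1\le 4$ and the divisibility of $2^{n-1}$ by $\lambda'_1$, so they do not carry over to general $k$.
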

\begin{proof}
Suppose that $n$ and $k$ satisfy the above conditions. Let $\lambda\in\Delta(2^{n+1},k+1)\setminus\Delta(2^{n+1},k)$. We will prove that $\lambda\in\Omega_2(2^{n+1})$, as in Lemma~\ref{1}, by using Proposition~\ref{fund} and the Littlewood-Richardson Rule. We will construct two partitions $\mu,\nu\in\Delta(2^n,k)\cup\{(2^n)\}\subseteq\Omega_2(2^n)$ such that $\mu\neq\nu$ and there is a L-R filling of $[\lambda\setminus\mu]$ of weight $\nu$. Consider the following algorithm on $[\lambda]$: mark the first $k$ boxes of the first column, the first two of the second, and then continue marking from left to right, top to bottom until the marked cells form the Young diagram of a partition $\mu\in\Delta(2^n,k)$. Note that $\mu$ cannot have $k+1$ parts, because this would require marking more than $2^n$ boxes (we will omit similar observations in other subsequent proofs). Then fill the boxes of $[\lambda\setminus\mu]$ as in Remark~\ref{filling} and let $\nu\in\mathcal{P}(2^n)$ be its weight. \newline \indent Firstly, let us prove that $\nu$ has at most $k$ parts. To do this, we will prove this slightly stronger fact: in applying the algorithm, we mark at least one cell in every column of $[\lambda]$ with at least $k$ cells. Suppose that $\lambda$ is such that this doesn't happen. Let $t$ be the index of the first untouched column (which must have at least $k$ elements). Since $k+2\leq n+2<2^n$, $t$ must be at least $4$. The number of marked boxes is $k+2+t-3=2^n$. The number of unmarked boxes in the first $t$ columns is at least $1+k-2+(k-1)(t-3)+k$, and it must not exceed $2^n$ (the total number of unmarked boxes). This gives the following inequality:
\[
1+k-2+(k-1)(t-3)+k\leq k+2+t-3
\]
\[
3\leq(k-2)(t-3)+k-3\leq0.
\]
\indent The reason why we proved this stronger result is that this can also be used to prove that $\mu\neq\nu$. Suppose that $\mu$ and $\nu$ have the same number of parts. By the previous observation there must be at least one column of $[\lambda]$ with $k+1$ boxes of which only one is marked. Due to how the algorithm is defined, this column can't be the first nor the second. Therefore the second column of $[\lambda]$ is made of $k+1$ boxes, of which only two are marked (the third can't be marked, otherwise all boxes in the second row would also be marked). This means that $[\mu]$ has one column with $k$ boxes, and all others with two or less. Meanwhile, $\nu$ has a column with $(k+1)-2=k-1\geq3$ boxes, and therefore $\mu\neq\nu$. This leaves the case in which $\nu$ is a non-trivial hook partition. \newline \indent Suppose that $\nu$ is a non-trivial hook partition. By the previous observation, $\nu$ cannot have $k$ parts. This means that if we modify the partitions $\mu$ and $\nu$ without changing the number of their parts, then they will still be distinct. Just as in the proof of Lemma~\ref{1}, if $\nu$ is a hook partition, then $[\lambda\setminus\mu]$ has only one column with more than one box. If the other boxes of $[\lambda\setminus\mu]$ are not all in the same row, it suffices to change the content of the last box of the last of these rows (excluding all elements of the column with more than one box) from a $1$ to a $2$. This changes $\nu$ into a non hook partition, without modifiying the number of parts.\newline
\indent The last remaining case is that in which all the cells of $[\lambda\setminus\mu]$ lie in the union of a column and a row. This is impossible. If this were to happen, this would mean that the row considered is the $(k+1)$-th, since the last box of the first column of $[\lambda]$ is left unmarked. Therefore, the number of cells in $[\lambda\setminus\mu]$ is at most $\lambda_{k+1}+k-1$, while the number of marked cells is at least $k(\lambda_{k+1}-1)$; this results in the inequality $\lambda_{k+1}+k-1\geq k(\lambda_{k+1}-1)$, which isn't satisfiable for $k\geq4$ and $\lambda_{k+1}\geq 2^n-(k-1)\geq13$.
\end{proof}
We will now give an example of the algorithm described in Lemma~\ref{2}.
\begin{example}
Let $\lambda=(25,\,2^3,\,1)\in\Delta(32,5)\setminus\Delta(32,4)$. The algorithm gives us the following figure
\[
\young(\times\times\times\times\times\times\times\times\times\times\times\times1111111111111,\times\times,\times1,\times2,1)
\]
The output is $\mu=(12,\,2,\,1^2)$ e $\nu=(15,\,1)$. We have $\nu\in\mathcal{H}(16)\setminus\{(16)\}$ so we must apply the modification described in Lemma~\ref{2}.
\[
\young(\times\times\times\times\times\times\times\times\times\times\times\times1111111111111,\times\times,\times1,\times2,2)
\]
Now $\mu=(12,\,2,\,1^2)$ e $\nu=(14,\,2)$.
\end{example}
The proof of the desired bound in the case of powers of $2$ is now essentially complete.
\begin{prop}
Let $n\geq4$ be an integer. Then
\[
\Delta (2^n,n)\subseteq\Omega_2(2^n).
\]
\end{prop}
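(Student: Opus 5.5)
The plan is to combine Lemma~\ref{1} and Lemma~\ref{2} in a double induction. We show, for every $k\geq 4$, the statement
\[
\mathrm{S}(k):\quad \Delta(2^m,k)\subseteq\Omega_2(2^m)\ \text{for all } m\geq k,
\]
by induction on $k$. The proposition is then $\mathrm{S}(n)$ specialised to $m=n$.

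The base case $\mathrm{S}(4)$ is exactly Lemma~\ref{1}, which gives $\Delta(2^m,4)\subseteq\Omega_2(2^m)$ for every $m\geq 4$.

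For the inductive step, assume $\mathrm{S}(k)$ for some $k\geq4$; we prove $\mathrm{S}(k+1)$. Fix $m\geq k+1$ and put $n'=m-1$, so that $n'\geq k\geq 4$. Both $n'$ and $n'+1=m$ are at least $k$, so $\mathrm{S}(k)$ gives $\Delta(2^{n'},k)\subseteq\Omega_2(2^{n'})$ and $\Delta(2^{n'+1},k)\subseteq\Omega_2(2^{n'+1})$. These are precisely the hypotheses of Lemma~\ref{2} with the pair $(n',k)$, and its conclusion is $\Delta(2^{m},k+1)\subseteq\Omega_2(2^{m})$. Since $m\geq k+1$ was arbitrary, $\mathrm{S}(k+1)$ holds.

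The only point needing care is the range of indices. Lemma~\ref{2} requires $n'\geq k$, which is why $\mathrm{S}(k)$ is stated for all $m\geq k$ and not just $m=k$. Applying it at $n'=m-1$ uses $\mathrm{S}(k)$ at both $m-1$ and $m$, and both lie in the range $\geq k$ because $m\geq k+1$. With this bookkeeping the induction closes, and taking $k=m=n$ gives $\Delta(2^n,n)\subseteq\Omega_2(2^n)$ for all $n\geq 4$. No new combinatorics is needed: all the Littlewood--Richardson work sits inside Lemmas~\ref{1} and~\ref{2}.
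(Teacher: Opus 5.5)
Your proposal is correct and is essentially the paper's proof: the paper also strengthens the claim to $\Delta(2^m,k)\subseteq\Omega_2(2^m)$ for all $m\geq k$, takes Lemma~\ref{1} as the base case $k=4$, and closes the induction by applying Lemma~\ref{2} to the pair $(m-1,k)$. Your check that $m-1\geq k$, so that both hypotheses of Lemma~\ref{2} come from the inductive statement, is exactly the bookkeeping the paper's argument relies on.
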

\begin{proof}
We will prove by induction on $n\geq4$ that for all $m\geq n$, we have $\Delta(2^m,n)\subseteq\Omega_2(2^m)$, which is equivalent to the statement above. The base case is given by Lemma~\ref{1}. Let $n>4$ such that for all $N\geq n-1$, we have $\Delta(2^N,n-1)\subseteq\Omega_2(2^N)$. Let $m\geq n$, then $m,\,m-1\geq n-1\geq4$, so $\Delta(2^{m-1},n-1)\subseteq\Omega_2(2^{m-1})$ and $\Delta(2^{m},n-1)\subseteq\Omega_2(2^{m})$. By Lemma~\ref{2} we have $\Delta(2^m,n)\subseteq\Omega_2(2^m)$.
\end{proof}

To prove the corresponding result for all positive integers we will require a slightly stronger bound for powers of $2$, obtained by excluding almost-hook partitions. This we present as the following proposition.

\begin{prop}\label{extension}
If $n\geq4$ is an integer, then
\[
\Delta(2^n, 2n-3)\setminus\mathcal{AH}(2^n)\subseteq\Omega_2(2^n).
\]
\end{prop}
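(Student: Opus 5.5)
Throughout, $\lambda\in\Delta(2^n,2n-3)\setminus\mathcal{AH}(2^n)$ and we argue by induction on $n$ via Proposition~\ref{fund}.

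\textbf{Base case and reduction.} For $n=4$ we have $2n-3=5$, and the statement can be checked with \cite{GAP}, as in Lemma~\ref{1}. For $n\geq5$, by the Proposition above $\Delta(2^n,n)\subseteq\Omega_2(2^n)$, so we may assume that $\lambda$ has $k+1$ parts with $n\leq k\leq 2n-4$. Since $\lambda$ is neither a hook nor an almost-hook, $[\lambda]\setminus H_{11}(\lambda)$ has at least two cells: either $\lambda_2\geq3$ or $\lambda'_2\geq3$.

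\textbf{Inductive step.} By Proposition~\ref{fund} it suffices to find $\mu\neq\nu$ in $\Omega_2(2^{n-1})$ with $c^\lambda_{\mu\nu}\neq0$. We aim to take both $\mu$ and $\nu$ in
\[
\bigl(\Delta(2^{n-1},2n-5)\setminus\mathcal{AH}(2^{n-1})\bigr)\cup\Delta(2^{n-1},n-1)\cup\{(2^{n-1})\},
\]
which lies in $\Omega_2(2^{n-1})$ by the inductive hypothesis and the previous Proposition. The point is that the first column of $[\lambda]$ should be split roughly evenly between $\mu$ and $\nu$. Since $k+1\leq 2n-3$, the two pieces can have lengths $a$ and $k+1-a$ with both at most $n-1$ (or at most $2n-5$ when the piece is not an almost-hook).

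\textbf{Construction.} Adapt the algorithm of Lemma~\ref{2}. Mark the first $a=\lceil (k+1)/2\rceil\leq n-1$ boxes of the first column, then the first two or three boxes of the second column (enough to make $\mu$ a non-hook), and then continue marking left to right, top to bottom until $2^{n-1}$ boxes are marked. This gives $\mu\in\Delta(2^{n-1},n-1)$. Fill $[\lambda\setminus\mu]$ as in Remark~\ref{filling} and let $\nu$ be the weight. A counting argument, as in Lemma~\ref{2}, should show that every column of $[\lambda]$ of length at least $a$ gets a marked box. Hence $\nu$ has at most $k+1-a\leq n-1$ parts, so $\nu\in\Delta(2^{n-1},n-1)\cup\mathcal{H}(2^{n-1})$. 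The inequality there used $2^n$ large compared with $k$; here $k\leq 2n-4$ and $2^{n-1}$ is still much larger than $n^2$ for $n\geq5$, up to small checks.

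\textbf{Remaining cases.} If $\mu=\nu$, shift one box between the first two columns to change the column profile, as in Lemma~\ref{1}. If $\nu$ is a non-trivial hook, change a $1$ into a $2$ in the last row-segment, or move the last marked box of a column, as in Lemmas~\ref{1} and~\ref{2}, without changing the number of parts.

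\textbf{Main obstacle.} The hook-$\nu$ case when all of $[\lambda\setminus\mu]$ lies in one row plus one column is the hardest. Here the non-almost-hook hypothesis on $\lambda$ must be used: the cells of $[\lambda]\setminus H_{11}(\lambda)$ must be distributed so that both $\mu$ and $\nu$ become non-hooks. I would first try to place one such cell in $\nu$ and the other in $\mu$, rebalancing the first-column split by one box. I would then check that the parts bound still holds, since each side keeps at most $n-1$ parts, so no almost-hook restriction is needed.
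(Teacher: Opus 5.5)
Your overall framework (induction on $n$, Proposition~\ref{fund}, a marking algorithm with the filling of Remark~\ref{filling}, then repairing the cases $\mu=\nu$ and $\nu$ a hook) matches the paper's, but your construction fails at its central step. From ``every column of length at least $a$ gets a marked box'' you conclude that $\nu$ has at most $k+1-a$ parts, and this does not follow. The number of parts of $\nu$ is the largest number of unmarked cells in a single column. A column of length $L$ with only one marked cell therefore forces $L-1$ parts, and $L-1$ can be as large as $k\geq n$. The analogous inference in Lemma~\ref{2} works only because there $\mu$ takes $k$ of the $k+1$ cells of the first column, so one mark per long column suffices. With a balanced split of the first column, one mark is not enough.

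Concretely, take $n=6$ and $\lambda=(40,3^8)\in\Delta(64,9)\setminus\mathcal{AH}(64)$, so $a=5$. After marking $5$ cells of column~1 and $2$ or $3$ of column~2, the remaining $25$ or $24$ marks all fall in row~1, which still has $38$ free cells. Column~3 then keeps $8$ unmarked cells, and you get $\nu=(16,3^3,2^3,1)$ or $\nu=(17,3^3,2^2,1^2)$. Either way $\nu$ has $8>2(n-1)-3$ parts. So $\nu$ lies outside $\Delta(32,7)$, and neither the inductive hypothesis nor the previous Proposition says anything about it. Your estimate ``$2^{n-1}$ is much larger than $n^2$'' is also false for $n=5,6$, which is exactly where the budget of marks is tight.

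The missing idea is column-by-column control of the unmarked cells. The paper marks $n-1$ cells of the first column (not half of it). It then marks the first $\max\{2,\lambda'_2-(n-2)\}$ cells of column~2 and the first $\max\{1,\lambda'_i-(n-1)\}$ cells of each column $i\geq 3$. Only after that does it fill left to right, top to bottom. This leaves at most $n-1$ unmarked cells in every column, provided the $2^{n-1}$ marks are not used up before all columns with at least $n-1$ cells have been treated. Proving that proviso is the real counting argument, and for $n=5$ it needs a separate case analysis. On the example above the paper's rule gives $\mu=(21,3^3,2)$ and $\nu=(22,3^3,1)$, both with $5$ parts.

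Given this bound, the case $\mu=\nu$ is repaired by moving one mark into the first column; $\mu$ then has $n$ parts, and this is where the inductive hypothesis is genuinely needed. In the hook case, the paper shows that if all unmarked cells lie in one row and one column, then $\lambda\in\mathcal{H}(2^n)\cup\mathcal{AH}(2^n)$. Your treatment of these two cases is too vague to check, but the step that actually breaks is the bound on the number of parts of $\nu$.
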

\begin{proof}
We prove this result by induction on $n\geq4$. The base case has been checked with \cite{GAP}. Let $n>4$, such that the statement is true for $n-1$. Let $\lambda\in\Delta(2^n, 2n-3)\setminus(\Delta(2^n,n)\cup\mathcal{AH}(2^n))$. Let $1\leq k\leq n-3$ be the integer such that $n+k$ is the number of parts of $\lambda$. Just as in Lemma~\ref{2}, we will provide an algorithm that gives two partitions $\mu,\nu\in\Omega_2(2^{n-1})$, such that $\mu\neq\nu$ and there is a L-R filling of $[\lambda\setminus\mu]$ of weight $\nu$. The algorithm is the following: mark the first $n-1$ cells in the first column of $[\lambda]$, then the first $\mathrm{max}\{2,\,\lambda_2'-(n-2)\}$ in the second, then the first $\mathrm{max}\{1,\,\lambda_i'-(n-1)\}$ in the $i$-th column (starting from $i=3$), until $2^{n-1}$ cells have been marked or the last column has been reached. If the number of marked cells is less than $2^{n-1}$, we reach that number by marking boxes from left to right, top to bottom (we will refer to this phase as the \textit{filling phase}). As before, we call the partition corresponding to the obtained diagram $\mu$, while $\nu$ is the partition obtained as the weight of the usual filling of $[\lambda\setminus\mu]$ (Remark~\ref{filling}). Clearly $\mu\in\Delta(2^{n-1},n-1)$.\newline
\indent Firstly, we will prove that $\nu$ has at most $n-1$ parts. To do this we will prove a stronger fact which will be used later: the algorithm can't terminate before marking all the required cells in each of the columns with at least $n-1$ boxes, plus one more. If the algorithm were to terminate by marking $\mathrm{max}\{1,\,\lambda_i'-(n-1)\}$ or less boxes in a column with $n-1$ boxes or more, this would mean that, since each column has at most $2n-3$ boxes, the number of marked cells in each column is at most $n-1$. Let $l$ be the number of columns of $[\lambda]$ with at least $n-1$ cells. The number of marked cells must be $2^{n-1}$, therefore we must have:
\[
l\geq \frac{2^{n-1}}{n-1}.
\]
Note that, since the \textit{filling phase} isn't necessary in this case, in each column with at least $n-1$ elements from the third onward the number of marked cells is strictly smaller than the number of unmarked cells. Let's call the number of marked cells in these columns $t$: the number of unmarked cells in these columns is at least $t+l-2$. Let $m$ be the number of marked cells in the first $l$ columns and $u$ the number of unmarked cells in these same columns:
\[
(n-1)+(k+2)+t\geq m=2^{n-1}\geq u\geq (k+1)+(n-3)+t+l-2
\]
\[
5\geq l \geq \frac{2^{n-1}}{n-1}.
\]
This is possible only when $n=5$. If $n=5$, there are only five partitions of $16$ whose diagrams have at most $4$ rows and $5$ columns: the conjugate partitions of $(4^4)$, $(4^3,\,3,\,1)$, $(4^3,\,2^2)$, $(4^2,\, 3^2,\,2)$ e $(4,\,3^4)$. In this case $2n-3=7$ so $[\lambda]$ has at most $7$ cells in each column. In the situation above, the algorithm marks at most $3$ cells in each column after the second: therefore, $\mu'$ can't be any of the first three. If $\mu'$ were one of the other two partitions, then $\mu_3'=3$, which means that $\lambda_3'=7$, so $\lambda_2'=7$ and $\mu_2'=4$. This leaves $(4^2,\,3^2,\,2)$. In this case $\lambda'=(7^4,\,6,\,\ldots)$, which isn't a partition of $32$. This proves that $\nu$ has at most $n-1$ parts. \newline
\indent Suppose $\mu=\nu$. This means that there is a column in $[\lambda]$ that comes after the second and has $n+i$ boxes, with $i\geq0$, of which exactly $i+1$ have been marked. We deduce from this that no boxes were marked in the $(i+3)$-th column during the \textit{filling phase} of the algorithm. The second column is $\lambda_2'=n+j$ with $j\geq i$. Since $j+2\geq i+2$, exactly $j+2$ boxes have been marked in the second column. Therefore, $[\mu]$ has one column with $n-1$ boxes, and all others with at most $j+2$; meanwhile, $\nu$ has at least one column with $n-1$ boxes and at least one with $n-2$. If $n-2>j+2$, then $\mu\neq\nu$. This leaves $j=n-4,n-3$. In these cases, unmark the last cell marked and mark the first unmarked cell in the first column of $[\lambda]$. We replace $\mu$ with the partition corresponding to the new diagram made of marked boxes. We then fill $[\lambda\setminus\mu]$ using the same rule as before. The new partitions $\mu$ and $\nu$ are now such that $\mu$ has $n-1+1\leq 2(n-1)-3$ parts, and a column with $j+2\geq n-2\geq 3$ boxes, so $\mu\in\Omega_2(2^{n-1})$ since it isn't a hook or almost-hook partition. Meanwhile, $\nu$ has $n-1$ parts and a column with $n-2\geq 3$ boxes, so it isn't a hook partition. \newline
\indent The remaining case is that in which $\nu$ is a non trivial hook partition. By the proof above, we know that, in this case, $\nu$ has at most $n-2$ parts. Just as in Lemma~\ref{2} we can suppose that the unmarked cells lie in the union of a column and a row of $[\lambda]$ (otherwise we can change a $1$ to a $2$ in the filling). The column must be the first: we know that the boxes in the $n$-th and $n+1$-th positions of this column are left unmarked. 
The row must be the first. If this weren't true, let $l$ be the number of unmarked elements in this row that aren't in the first column. Counting marked and unmarked boxes, we have $l+(n-2)\geq l+(n-1)$. Therefore the algorithm has terminated before the \textit{filling phase}, marking all boxes of the columns from the second onward until it stops before a column with just one part. This can only happen if $\lambda\in\mathcal{H}(2^n)\cup\mathcal{AH}(2^n)$.

\end{proof}

\section{The general case}
The first step towards generalizing the bound we have now proven for powers of two will be to give a generalization of Proposition~\ref{extension}. This will yield most cases and, at the same time, give a useful instrument for handling the rest.

\begin{prop}\label{help}
Let $n\geq16$ be an integer. Suppose $k$ is the greatest integer such that $2^k\leq n$. Then 
\[
\Delta(n,2k-3)\setminus\mathcal{AH}(n)\subseteq\Omega_2(n).
\]
\end{prop}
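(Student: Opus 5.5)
Write $n=2^k+r$ with $0\le r<2^k$. If $r=0$ the statement is Proposition~\ref{extension} (note $n\geq16$ forces $k\geq4$). So assume $r>0$ and use Proposition~\ref{fund gen} with $m=2^k$ and $n-m=r$, where $r$ is a sum of powers of $2$ all smaller than $2^k$. Given $\lambda\in\Delta(n,2k-3)\setminus\mathcal{AH}(n)$, it suffices to find $\mu\in\Omega_2(2^k)$ and $\nu\in\Omega_2(r)$ such that $[\lambda\setminus\mu]$ has an L-R filling of weight $\nu$. The plan is to take $\nu$ to be either the trivial partition $(r)$, or, when needed, a hook $(r-(l-1),1^{l-1})$ with $l\le t-1$. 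By Lemma~\ref{H} such hooks lie in $\Omega_2(r)$, where $t-1$ is the number of binary digits of $r$. Then $\mu$ is a subpartition of $\lambda$ of size $2^k$ lying in $\Delta(2^k,2k-3)\setminus\mathcal{AH}(2^k)$ or equal to $(2^k)$, and these lie in $\Omega_2(2^k)$ by Proposition~\ref{extension}.

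With $\nu=(r)$ the L-R condition says $[\lambda\setminus\mu]$ is a horizontal strip, i.e.\ $\mu_i\ge\lambda_{i+1}$ for all $i$ and $\mu$ has at least $\lambda'_1-1$ parts. So the main step is to choose $\mu$ by deleting $r$ boxes from $[\lambda]$, at most one per column, from the bottom of the columns, greedily from the right. This is possible exactly when $\lambda_1\ge r$. Since $\lambda$ has at most $2k-3$ parts and $n\ge 2^k+1$, we get $\lambda_1\ge n/(2k-3)$, but this may still be smaller than $r$. In that case I would instead remove whole vertical pieces, using the trick of Proposition~\ref{even} and its tableau argument only as a fallback. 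The main alternative is to take $\nu$ with several rows, i.e.\ a non-hook $\nu\in\Omega_2(r)$, choosing $\nu$ with all parts even (Proposition~\ref{even}) or with few rows. For the latter we can use induction: the statement for smaller $n$ when $r\geq16$, and Theorem~\ref{main}-type small cases checked directly otherwise. A cleaner route is to remove from $\lambda$ a "column-filling" as in Remark~\ref{filling}, with weight $\nu$ having at most $\lambda'_1-\mu'_1+$(small) parts, and control $\nu$ so that it lies in $\Omega_2(r)$.

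Next, check that $\mu$ is admissible, i.e.\ that it is neither a hook nor an almost-hook (unless it is $(2^k)$). Since $\lambda$ is not a hook or almost-hook, $[\lambda]$ contains $(2,2)$ together with a third box off the first hook. Removing at most one box per column from the bottom can destroy this only in boundary configurations, for instance $\lambda$ with second and third rows short. In those cases I will re-choose which columns lose boxes, taking the removed boxes from the first row's tail and keeping rows $2,3$ intact. This works provided $\lambda_1-\lambda_2\ge r$, or otherwise by spreading the removal over two rows and using a hook weight $\nu=(r-1,1)$, which is allowed when $t\ge3$. The number of parts of $\mu$ is at most that of $\lambda$, so $\mu\in\Delta(2^k,2k-3)$ automatically.

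The main obstacle is the regime where $\lambda$ is "tall and narrow", with $\lambda_1<r$, so no horizontal strip of size $r$ exists. Here $\nu$ must have several rows, and its membership in $\Omega_2(r)$ is not guaranteed by the earlier results when $r$ is small (e.g.\ $r<16$) or when $r$ is a power of $2$ and $\nu$ is an almost-hook. I would first try to make $\nu$ have all even parts by deleting boxes in adjacent column pairs, so that Proposition~\ref{even} applies. This needs $r$ even; if $r$ is odd, I would first peel off the $2^0$ summand via Proposition~\ref{fund gen} with a single box. Failing that, I would split according to the binary digits of $r$ and iterate Proposition~\ref{fund gen}, handling each summand with Proposition~\ref{extension} or Lemma~\ref{1}. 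The counting inequalities ensuring these choices fit inside $[\lambda]$ use $n\ge16$ and $\lambda'_1\le 2k-3$.
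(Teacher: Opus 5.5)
There is a genuine gap. Your main construction takes $\nu=(r)$, or a hook $\nu=(r-l+1,1^{l-1})$ with $l$ at most the number of binary digits of $r$. This needs $\lambda_1\geq\nu_1=r-l+1$, because the entries equal to $1$ in an L-R filling lie in distinct columns.

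That condition fails generically, not only in boundary cases. Take $n=31$, so $k=4$, and $\lambda=(7,6^4)\in\Delta(31,5)\setminus\mathcal{AH}(31)$. Then $r=15$, and any hook in $\Omega_2(15)$ has first part at least $12$, but $\lambda$ has only $7$ columns.

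For this \emph{tall and narrow} regime, which carries essentially all the content of the proposition, you only list strategies you would try: even weights, induction on $r$, or iterating Proposition~\ref{fund gen} over the binary digits of $r$. None of them is carried out. Some are also unsound as stated:
\begin{itemize}
\item $(r-1,1)\notin\Omega_2(r)$ when $r$ is a power of $2$, by Lemma~\ref{H}.
\item Theorem~\ref{main} cannot be invoked, since its proof relies on this proposition.
\item Proposition~\ref{extension} and Lemma~\ref{1} say nothing about $\Omega_2(2^a)$ for $a\leq 3$.
\end{itemize}

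The idea you mention in passing, forcing $\nu$ to have all parts even, is the right one. It also makes the horizontal-strip and hook cases unnecessary. The paper's proof runs as follows.
\begin{itemize}
\item It first reduces to $n$ even by deleting one box so that $\lambda$ stays neither a hook nor an almost-hook. This is Proposition~\ref{fund gen} applied with the summand $2^0$.
\item It then removes boxes in rounds. In each round it deletes the bottom boxes of the largest even number of rightmost columns of the current diagram that does not leave a hook or almost-hook. Columns untouched in that round are discarded, and the process stops once $r=n-2^k$ boxes are gone.
\item The columns touched in round $i+1$ are among those touched in round $i$. Hence the column filling of Remark~\ref{filling} has weight $\nu$ with $\nu_i$ equal to the number of boxes removed in round $i$.
\item So all parts of $\nu$ are even and $\nu\in\Omega_2(r)$ by Proposition~\ref{even}. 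Also $\mu\in\Delta(2^k,2k-3)\setminus\mathcal{AH}(2^k)\subseteq\Omega_2(2^k)$ by Proposition~\ref{extension}.
\end{itemize}

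What you are missing is the counting step showing that the rounds cannot run out before $r$ boxes are removed. Columns are abandoned from left to right, at most one per round, up to a short case analysis for the hook/almost-hook restriction. So the $j$-th column retains at most $2k-2-j$ boxes. The leftover diagram therefore has at most $(2k-3)(2k-2)/2<2^k$ boxes. This is impossible if fewer than $r$ boxes had been removed, since the leftover would then have more than $2^k$ boxes.
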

\begin{proof}
Suppose $n$ and $k$ satisfy the conditions above, and $\lambda\in\Delta(n,2k-3)\setminus\mathcal{AH}(n)$. We can assume that $n$ is even: if $n$ is odd, just remove any box of $[\lambda]$ such that the remaining diagram doesn't correspond to a hook or almost-hook partition. We will use the Littlewood-Richardson Rule and Proposition~\ref{fund gen} to reduce to the base case proven in Proposition~\ref{extension}. We will do so by finding $\mu\in\Delta(2^k,2k-3)\setminus\mathcal{AH}(2^k)$ a subpartition of $\lambda$, and a L-R filling of $[\lambda\setminus\mu]$ of weight $\nu$, a partition with all parts even (Proposition~\ref{even}).\newline
\indent We define the following algorithm: consider the last box in each column of $[\lambda]$, going from right to left, mark the maximum even number of these, such that the remaining boxes form the diagram of a partition which is not a hook nor an almost-hook partition. Remove the marked boxes of $[\lambda]$ and the columns in which no boxes have been marked. Repeat on the new diagram. Interrupt the algorithm once exactly $n-2^k$ boxes have been marked or the diagram that's left is empty. Let $\mu$ be the subpartition of $\lambda$ corresponding to the diagram made up of all the unmarked cells in $[\lambda]$. Then use the usual L-R filling (Remark~\ref{filling}) of $[\lambda\setminus\mu]$: it's weight is clearly a partition with all parts even ($\nu_1$ is the number of cells marked at the first iteration, $\nu_2$ the number marked at the second, and so on). All that's left to prove is that it is, in fact, a partition of $n-2^k$. To do this, we will show that the algorithm cannot terminate before marking that many boxes. Suppose that the algorithm ends without marking $n-2^k$ boxes (so it reaches an empty set of boxes). Due to the algorithm's definition there can't be more than $2k-3$ unmarked boxes in the first column, $2k-4$ in the second, $2k-5$ in the third, and so on. This is clear when the algorithm operates \textit{smoothly}, by which we mean that we can always mark the maximal number of boxes without leaving a hook or almost-hook partition. If at some iteration the algorithm cannot remove a box from the second column because this would make the remaining diagram an almost-hook partition, this means that there are only three boxes left in the second column and at most two in the ones following it. One box in the fourth column is marked at this iteration. The algorithm resumes its function regularly from the third or fourth column, which means that there are at most $3$ unmarked boxes in the second column, $2$ in the third, and $1$ in the fourth. If the algorithm cannot mark a cell in the third column, this means that there are only two unmarked cells in the second. Just as before we conclude that the number of unmarked boxes is at most $2$ in the second, third, and fourth column, and $1$ in the fifth. Therefore, the number of unmarked boxes can be at most $\frac{(2k-3)(2k-2)}{2}<2^k$, so the number of marked boxes must be greater than $n-2^k$ which gives a contradiction.
\end{proof}
The following example illustrates the functioning of the algorithm described in Proposition~\ref{help}.
\begin{example}
Let $n=23=16+4+2+1$ and $\lambda=(5^3,\,4^2)$. Then we can reduce to $n=22$ by taking for example $(5^3,\,4,\,3)$. We then apply the algorithm and obtain the following figure.
\[
\young(~~~~~,~~~~1,~~~12,~~~2,~11)
\]
In this case $\mu=(5,\,4,\,3^2,\,1)$ and $\nu=(4,\,2)$.
\end{example}

For many $n$, Proposition~\ref{help} gives a stronger result than the one we have set out to prove. In particular, let $n=\sum_{i=1}^{t}2^{a_i}$, with $a_1>\cdots>a_t$, if $a_1+t-1\leq 2a_1-3$, then Proposition~\ref{help} and Lemma~\ref{AH} imply the desired result. There are, however, still an infinite number of cases that aren't included in the above result ($t\in\{a_1-1,\,a_1,\,a_1+1\}$). These will be eliminated in the next and final proof of this paper.

\begin{thm}\label{main}
Let $n$ be a positive integer. Let $n=\sum_{i=1}^{t}2^{a_i}$, with $a_1>\cdots>a_t$. Let $k_n$ be the maximum integer such that $\Delta(n,k_n)\subseteq\Omega_2(n)$. Then:
\[
k_n=\begin{cases}
\infty & \mbox{if $n\leq 5$ (formally, since $\mathcal{P}(n)\setminus\mathcal{H}(n)\subseteq\Omega_2(n)$)}\\
1 & \mbox{if $n=6, 8, 10$}\\
2 & \mbox{if $n=9$}\\
a_1+t-1 & \mbox{if $n=7$ or $n\geq11$}
\end{cases}
\]
\end{thm}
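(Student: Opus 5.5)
The plan splits into an upper bound, the small exceptional values of $n$, and the lower bound for $n\geq 11$.

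\emph{Upper bound.} For $n=7$ or $n\geq 11$, Lemma~\ref{AH} shows that the almost-hook partition $(n-l,2,1^{l-2})$ with $l=a_1+t$ is a non-hook partition with $a_1+t$ parts that does not lie in $\Omega_2(n)$. Hence $k_n\leq a_1+t-1$. This needs $n-l\geq 2$, which holds because $a_1+t\leq 2\log_2 n+1$ is small compared to $n$.

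\emph{Small cases.} For $n\leq 10$ I would verify the statement directly, by GAP or by a hand application of Proposition~\ref{fund gen} and Lemma~\ref{AH}. For example, for $n=6,8,10$ the partition $(n-2,2)$ lies in $\Omega_2(n)$, so $k_n\geq 2$ would require every non-hook partition with two parts to lie there; one exhibits a failing two-part partition, such as $(5,3)$ for $n=8$.

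\emph{Lower bound for $n\geq 11$.} Let $k$ be maximal with $2^k\leq n$, so $k=a_1$ and $t\leq a_1+1$. There are two regimes.
\begin{itemize}
\item If $a_1+t-1\leq 2a_1-3$ and $n\geq 16$, Proposition~\ref{help} gives every non-almost-hook partition in $\Delta(n,a_1+t-1)$. Lemma~\ref{AH} gives the almost-hooks with at most $a_1+t-1$ parts.
\item The remaining cases are $t\in\{a_1-1,a_1,a_1+1\}$, together with $11\leq n\leq 15$.
\end{itemize}

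For the remaining cases I would induct on $t$ using Proposition~\ref{fund gen}, with $m=2^{a_1}$ and the rest $N=n-2^{a_1}$. Take $\lambda$ non-hook with at most $a_1+t-1$ parts. The aim is to split $\lambda$ into a subpartition $\mu\in\Omega_2(2^{a_1})$ and an L-R filling of $[\lambda\setminus\mu]$ whose weight $\nu$ lies in $\Omega_2(N)$. The target for $\nu$ is either a hook with at most $t-1$ parts (Lemma~\ref{H}) or, recursively, a member of $\Delta(N,\cdot)$. The target for $\mu$ is $\Delta(2^{a_1},2a_1-3)\setminus\mathcal{AH}(2^{a_1})$ (Proposition~\ref{extension}), or $(2^{a_1})$, or an allowed almost-hook.

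The concrete construction is as follows. Take the bottom $t-1$ boxes of the first column, or fewer if $\lambda$ is shorter, plus enough boxes from the ends of the first row, and fill them with the column filling of Remark~\ref{filling}. The weight is then a hook $(N-j+1,1^{j-1})$ with $j\leq t-1$, which lies in $\Omega_2(N)$ by Lemma~\ref{H}. What remains is $\mu$, with at most $a_1+t-1-(t-1)=a_1\leq 2a_1-3$ parts.

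The main obstacle is ensuring $\mu$ is neither a hook nor a forbidden almost-hook. This occurs when $\lambda$ itself is close to a hook, so that removing boxes from the first row or column degenerates it. In that situation I would instead remove boxes from the bottoms of the later columns, as in the algorithm of Proposition~\ref{help}, and accept $\nu$ being an even-part partition, which lies in $\Omega_2(N)$ by Proposition~\ref{even} and Proposition~\ref{fund gen}. Alternatively, if $\mu$ is an almost-hook, I would check it has at most $a_1$ parts, so that it is allowed by Lemma~\ref{AH}.

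The residual finite range $11\leq n\leq 15$, and possibly $16\leq n<32$ where Proposition~\ref{extension}'s base bounds are tight, I would settle by GAP, as the paper does for the base cases.
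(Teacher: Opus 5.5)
Your upper bound via Lemma~\ref{AH}, your treatment of the small cases, and your use of Proposition~\ref{help} to reduce to $t\in\{a_1-1,a_1,a_1+1\}$ all agree with the paper. The gap is in the remaining case, which is the real content of the theorem. There the partitions still to be treated have between $2a_1-2$ and $a_1+t-1\le 2a_1$ parts, since those with at most $2a_1-3$ parts are already covered by Proposition~\ref{help}.

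Your concrete construction is not available for such $\lambda$ in general. Removing the bottom $t-1$ boxes of the first column together with boxes at the end of the first row leaves a Young diagram only if two conditions hold: the last $t-1$ rows of $\lambda$ have length $1$, and $\lambda_1-\lambda_2\ge N-(t-1)$. So it works precisely when $\lambda$ is close to a hook, and it breaks down as soon as $\lambda_\ell\ge 2$ (with $\ell$ the number of parts) or $\lambda_1=\lambda_2$. You have the obstacle backwards.

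The fallback does not rescue the generic case. Stripping column bottoms as in Proposition~\ref{help} gives no control on the number of parts of $\mu$. Yet $\mu$ must be cut down to at most $2a_1-3$ rows to land in the range where $\mu\in\Omega_2(2^{a_1})$ is known (Proposition~\ref{extension}). You give no argument that this can be done while keeping every part of the weight $\nu$ even.

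The missing idea is the paper's. Put \emph{all} rows from the $(2a_1-2)$-th onward into $[\lambda\setminus\mu]$, which guarantees $\mu\in\Delta(2^{a_1},2a_1-3)\setminus\mathcal{AH}(2^{a_1})$. Then keep removing bottom boxes of columns until $N=n-2^{a_1}$ boxes are gone, skipping any removal that would make $\mu$ a non-partition, a hook or an almost-hook, and allow $\nu$ to be arbitrary. This approach then requires four further steps:
\begin{itemize}
\item[(i)] estimates showing $\lambda_{2a_1-2}+\lambda_{2a_1-1}+\lambda_{2a_1}\le N$, so that exactly $N$ boxes are removed;
\item[(ii)] a count showing $\nu$ has at most $2a_2-3$ parts, so that Proposition~\ref{help} applies to $\nu\in\mathcal{P}(N)$, which needs $N\ge 16$;
\item[(iii)] an argument that a hook or almost-hook $\nu$ has at most $3\le t-1$ parts, so that Lemmas~\ref{H} and~\ref{AH} apply;
\item[(iv)] a separate treatment of the finitely many $n\ge 32$ with $a_2<4$.
\end{itemize}
Your suggestion of recursing on $\Delta(N,\cdot)$ might replace (ii). You would still need an explicit removal procedure and part-count estimates of this kind, and none appear in the proposal.
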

\begin{proof}
 All cases up to $n=31$ have been verified using \cite{GAP}. Let $n\geq32$ and $n=\sum_{i=1}^{t}2^{a_i}$ its binary decomposition, with $a_1>\cdots>a_t$. Firstly, let us remind the reader that Lemma~\ref{AH} provides the description of $\Omega_2(n)\cap\mathcal{AH}(n)$, and at the same time proves $k_n\leq a_1+t-1$, so we just have to prove that $\Delta(n,a_1+t-1)\setminus\mathcal{AH}(n)\subseteq\Omega_2(n)$. To do this we will use Proposition~\ref{fund gen} and Proposition~\ref{help}. Let $\lambda\in\Delta(n,a_1+t-1)\setminus\mathcal{AH}(n)$. By Proposition~\ref{help} we may assume that $a_1+t-1\geq 2a_1-2$ (which is to say $t=a_1-1,\,a_1,\,a_1+1$) and that $\lambda$ has at least $2a_1-2$ parts. We will proceed by describing a similar algorithm on $[\lambda]$ to the one described in Proposition~\ref{help}, such that the returned partitions $\mu\in\mathcal{P}(2^{a_1})$ and $\nu\in\mathcal{P}(n-2^{a_1})$ are of the kinds whose behavior has been characterised in the various lemmas and propositions above. In particular we want to be able to apply Proposition~\ref{help} to $\mu$ and $\nu$, so for this part of the proof we will assume $n-2^{a_1}\geq16$ (equivalently $a_2\geq4$).\newline
\indent The algorithm is the following: mark all boxes in the rows numbered $2a_1-2$ onward; if the number of marked boxes hasn't exceeded $n-2^{a_1}$, mark the last box of each column from right to left (if one of such boxes has already been marked, leave as is), then the second last, and so on until $n-2^{a_1}$ cells have been marked. During the process, make sure to skip marking a box if the resulting diagram of unmarked boxes does not correspond to a partition or it corresponds to a hook or almost-hook partition. As before, we will call $\mu$ the partition corresponding to the diagram made up of unmarked boxes, and $\nu$ the partition obtained as the weight of the usual L-R filling of $[\lambda\setminus\mu]$ (Remark~\ref{filling}). The first thing we must prove is that the algorithm always marks exactly $n-2^{a_1}$ cells. The only way this may not occur is if $\lambda_{2a_1-2}+\lambda_{2a_1-1}+\lambda_{2a_1}>n-2^{a_1}$. If $a_1\geq 6$ some simple approximations show that this is impossible. To simplify notation, let $s:=t-a_1+2$ (the maximum number of parts following $\lambda_{2a_1-3}$):
\[
\lambda_{2a_1-2}+\lambda_{2a_1-1}+\lambda_{2a_1}\leq \frac{n}{2a_1-2}\cdot s< \frac{2^{a_1+1}}{2a_1-2}\cdot s
\]
\[
n-2^{a_1}\geq 1+\cdots+2^{a_1+s-4}=2^{a_1+s-3}-1
\]
\[
1\geq \frac{8}{2a_1-2}+\frac{1}{2^{a_1-2}}\geq \frac{2^{4-s}}{2a_1-2}\cdot s + \frac{1}{2^{a_1+s-3}}.
\]
The inequality for the remaining cases (those with $a_1=5$ and $a_2=4$) has been checked with \cite{GAP}. This means that $\mu$ and $\nu$ are partitions of the correct numbers. By the definition of the algorithm $\mu\in\Delta(2^{a_1},2a_1-3)\setminus\mathcal{AH}(2^{a_1})$. \newline
\indent Now we will prove that $\nu$ has at most $2a_2-3$ parts. If this weren't true, we would have had to mark more than the set comprised of the last $2a_2-3$ boxes in each column (except for the boxes $(1,2)$, $(2,2)$, $(3,2)$ or $(1,2)$, $(2,2)$, $(1,3)$, $(2,3)$). The number of such boxes is at least $\lambda_1+\cdots +\lambda_{2a_2-3}-4$. Therefore it suffices to prove that $\lambda_1+\cdots +\lambda_{2a_2-3}-4\geq n-2^{a_1}$ or equivalently that $\lambda_{2a_2-2}+\cdots+\lambda_{a_1+t-1}+4\leq 2^{a_1}$. As before, this is easy to prove for \textit{large} $a_1$ (in this case $a_1\geq9$), meanwhile, cases $a_1=6,\,7,\,8$ have been checked by hand using better approximations, and the case $a_1=5$ has been checked using \cite{GAP}. For $a_1\geq 9$ the simple approximations used are the following (again $s:=t-a_1+2$):
\[
a_1+t-1-2a_2+3\leq 2a_1+s-3-2(a_1-4+s)+3= 8-s
\]
\[
\lambda_{2a_2-2}+\cdots+\lambda_{a_1+t-1}+4\leq \lceil\frac{n}{a_1+t-1}\rceil\cdot (8-s)+4<\frac{2^{a_1+1}}{2a_1-2}\cdot 7+11
\]
\[
\frac{14}{2a_1-2}+\frac{11}{2^{a_1}}\leq 1.
\]
The only cases remaining now are those in which $\nu\in\mathcal{H}(n-2^{a_1})\cup\mathcal{AH}(n-2^{a_1})$. In this case we will prove that the number of parts of $\nu$ is at most $3\leq a_1-2\leq t-1$ so $\nu\in\Omega_2(n-2^{a_1})$ (by Lemma~\ref{H} and Lemma~\ref{AH}). Suppose that $\nu$ is a hook or almost-hook partition with at least four parts. This means that the algorithm has continued to the second phase, and has marked the last three cells in each column (excluding the ones of the type mentioned above). Since $[\nu]$ has at most two columns with more than one cell, only one of which (the first) has more than two cells, the first column of $[\nu]$ must correspond to the marked cells in the first column of $[\lambda]$, in which at least three boxes have been marked. At the third pass of the second phase of the algorithm, there must have been only one available cell to mark, and this must have been in the first column; this means that removing any other cell from the diagram gives an almost-hook partition or something that isn't a partition. This means that $\mu$ is of the form $(m,\,2^2)'$ or $(m,\,3)'$ and in both cases $m\leq2a_1-4$. However $m+4\leq2a_1<2^{a_1}$ gives a contradiction.\newline
\indent There are now only a finite number of remaining cases: those $n>32$ such that $t\geq a_1-1$ and at the same time $a_2<4$. These numbers are $39,\,43,\,45,\,46,\,47,\,79$. Each case can be solved by hand by reducing to a smaller $n$. For example, let $\lambda\in\mathcal{P}(39)$ be a partition with $8$ parts. Since $4<\frac{39}{8}<5$, this means that $\lambda_1\geq5$ and $\lambda_8\leq4$. If $\lambda_8=1,2,4$, let $\mu\in\mathcal{P}(38),\mathcal{P}(37),\mathcal{P}(35)$ (respectively) be the partition obtained by removing the last part, if $\lambda_8=3$, let $\mu\in\mathcal{P}(35)$ be the partition corresponding to the diagram obtained from $[\lambda]$ by removing the last row and the last cell of the last column. In every case $\mu$ is neither a hook nor almost-hook partition and the usual Littlewood-Richardson filling has weight corresponding to the trivial partition (of $1,2$ or $4$). We will omit the proof of the other cases, since they are very similar to the one above.
\end{proof}


\end{document}